\def\checkmark{\tikz\fill[scale=0.4](0,.35) -- (.25,0) -- (1,.7) -- (.25,.15) -- cycle;}
\theoremstyle{plain}
\newtheorem{theorem}{Theorem}[section]
\newtheorem{proposition}[theorem]{Proposition}
\newtheorem{lemma}[theorem]{Lemma}
\newtheorem{corollary}[theorem]{Corollary}
\newtheorem{remark}{Remark}
\newtheorem{example}[theorem]{Example}
\newcommand{\field}[1]{\mathbb{#1}}
\newcommand{\RR}{\field{R}}
\newcommand{\ZZ}{\field{Z}}
\newcommand{\N}{\field{N}}
\newcommand{\Fib}{\operatorname{Fib}}
\newcommand{\dd}{\operatorname{d}}
\DeclareMathAlphabet{\mymathbb}{U}{BOONDOX-ds}{m}{n}  % double one
\providecommand{\keywords}[1]
{
  \small	
  \textbf{\textit{Keywords---}} #1
}
\newcommand\blfootnote[1]{%
  \begingroup
  \renewcommand\thefootnote{}\footnote{#1}%
  \addtocounter{footnote}{-1}%
  \endgroup
}
\begin{document}

\title{Entropy for $k$-trees defined by $k$ transition matrices}

\author{Alexandre Baraviera}
\address{Instituto de Matem\'{a}tica e Estat\'{i}stica, UFRGS, Av. Bento Gon\c{c}alves 9500, 91500, Porto Alegre, RS, Brazil.}
\email{atbaraviera@gmail.com}
%\thanks{}

\author{Alex Becker}
\address{Centro de Ci\^{e}ncias Naturais e Exatas, UFSM, Av. Roraima 1000, Santa Maria, RS, Brazil.}
\email{alex.becker@ufsm.br}
%\thanks{}

\author{Andressa Cordeiro}
\address{Instituto de Matem\'{a}tica e Estat\'{i}stica, UFRGS, Av. Bento Gon\c{c}alves 9500, 91500, Porto Alegre, RS, Brazil.}
\curraddr{ISEA, Universit\'{e} de la Nouvelle-Cal\'{e}donie, Noum\'{e}a, New Caledonia.}
\email{andressap.ha@gmail.com}

\date{\today}

 \begin{abstract}
     We study Markov tree-shifts given by $k$ transition matrices, one for each of its $k$ directions. We provide a method to characterize the complexity function for these tree-shifts, used to calculate the tree entropies defined by Ban and Chang \cite{BanChangEntropy}, and Petersen and Salama \cite{PetersenSalama1}. Moreover, we compare these definitions of entropy in order to determine some of their properties. The characterization of the complexity function provided is used to calculate the entropy of some examples. The question of existence of a specific type of invariant measures for such tree-shifts is addressed. Finally, we analyse some topological properties introduced by Ban and Chang \cite{BanChangChaos} for the purpose of answering two of the questions raised by these authors.
 \end{abstract}

\keywords{Tree-shifts, entropy, complexity function}

\maketitle

\blfootnote{This study was financed in part by the Coordenação de Aperfeiçoamento de Pessoal de Nível Superior – Brasil (CAPES) – Finance Code 001, and within the scope of the Program CAPES/COFECUB. It was conducted during the third author's doctoral program.}

\section{Introduction}

A tree-shift displays properties of both one-dimensional and higher dimensional shift spaces, behaving as an intermediate class of symbolic dynamics. In addition, from a certain point of view, shifts on trees are a natural generalization of the full shift $\Sigma_d$ on the one-dimensional space of sequences $x=(x_n)_{n\geq0}$ over the alphabet 
$\mathcal{A} = \{ 0, \dots, d-1 \}$: given $x \in \Sigma_d$, there is only one possible ``forward'' position at any $x_n$, namely, $x_{n+1}$. 
Now, fix an integer $k > 1$ and allow each position to be followed along one of $k$ directions. This is basically what a $k$-tree represents. This structure is, for example, a branch of research in the context of machine learning and $k$-ary decision languages. See \cite{ArtigoProgramacao} and references therein.

Since the works of Aubrun and Béal \cite{AubrunBeal,AubrunBealSofic}, shifts on trees has received substantial attention: Ban and Chang \cite{BanChangEntropy}, and Petersen and Salama \cite{PetersenSalama1}, defined entropies for tree-shifts, as we now denote as $ h_{BC} $ and $ h_{PS} $, respectively, in terms of a complexity function, in a natural generalization of the entropy of $ \Sigma_d $. Furthermore, in \cite{BanChangChaos}, the authors generalized for tree-shifts the concepts of irreducibility, mixing and chaos in the sense of Devaney and provided many results relating these properties.

When it comes to entropy, works like \cite{BanChang2020,BanChangEtAll2021,BanChangEtAll2022,PetersenSalama1,PetersenSalama2} considered $k$-trees whose paths are generated by a shift space $\Sigma_d$, called \textit{hom tree-shifts}. In the case that $\Sigma_d = \Sigma_M$ is a Markov shift given by a $0,1$ transition matrix $M$, the allowed transitions for trees of the corresponding tree-shift $\mathcal{T}_M$ are also given by $M$, in all directions. As a consequence, in the tree-shifts aforementioned there are ``symmetrical elements'', or, in other words, there exist trees $t$ that have $k$ copies of a tree $t^\prime$ attached to its root.

In the case that $M$ is irreducible, the entropy $h_{PS}$ of both $\Sigma_M$ and $\mathcal{T}_M$ are equal if, and only if, the sum of all rows of $M$ are the same, as proved in \cite{BanChang2020}. An irreducible $M$ is also important to prove that the entropy of $ \mathcal{T}_M $ can be more than the maximum between the entropy of its corresponding irreducible blocks, while in dimension one, the equality holds. See \cite{BanChangEtAll2021}.

In this work, we consider tree-shifts whose elements are $k$-trees given by $k$ (possibly different) $d \times d $ matrices. The entries of these matrices can be $0$ (the correspondent transition is not allowed) or $1$ (the correspondent transition is allowed). In this context, $d$ is the cardinality of the alphabet $\mathcal{A}$. One can see that there are $k$-trees given by different matrices that cannot be associated to tree-shifts given by a single matrix for all directions. For this purpose, the reader is invited to follow the analysis, for example, of the binary tree-shift $X_{20} = (D,E)$ defined in Section \ref{sectionentropytable}. 

The main purpose of this text is to construct a method to calculate the characteristic function $p(n)$, inspired in \cite{PetersenSalama1}. Moreover, we prove properties of $h_{BC}$ and $h_{PS}$ for Markov tree-shifts given by $k$ transition matrices and calculate those entropies in some examples. The examples are important not only to understand our algorithm to determine entropies, but also to complement 
the work \cite{BanChangEntropy}, where Ban and Chang proposed four open problems, two of them relating entropy and topological properties of tree-shifts.

This study is organized in eight principal sections. The introduction is presented in Section 1 and the main definitions, in Section 2. Afterwards, Section 3 is dedicated to introduce an algorithm to calculate the complexity function of a Markov tree-shift given by transition matrices, what allows us to determine its entropy by an explicit (although of recurrence) relation. The following section compares the entropies proposed by Ban and Chang in \cite{BanChangEntropy}, and Petersen and Salama in \cite{PetersenSalama1}, in terms of topological invariance and maximum value reached. In Section 5, we give upper and lower bounds for the entropy $h_{PS}$ in terms of the matrices that determine the allowed transitions of a tree-shift. Then, in Section \ref{sectionentropytable}, we calculate the entropy of all the binary tree-shifts (given by two transition matrices) over the alphabet $\mathcal{A} = \{0,1\}$ in order to exhibit how our algorithm can be applied. Section \ref{SectionInvariantMeasures} presents the definition of a measure for tree-shifts, inspired by the context of Markov chains, and we prove a condition to the existence of a such invariant measure for the tree-shifts addressed in this work. Finally, in Section \ref{sectiontopologicalproperties} we present the definitions of irreducibility, mixing and chaos in the sense of Devaney for tree-shifts given by transition matrices, following the work of Ban and Chang \cite{BanChangChaos}, with a view to provide some discussions around the subject and answer questions proposed in their work.

\section{Notation and main definitions}

Let $\Sigma = \{a_1, \dots, a_k\}$ be the set of the $k$ generators of the free monoid $\Sigma^*$ with the operation of concatenation. The elements of $\Sigma^*$ are the finite words of any length and the empty word $\epsilon$. Given a finite alphabet $\mathcal{A} = \{0, \dots, d - 1\}$, a \textit{labeled $k$-tree} $t$, or simply $k$-tree, is a map from $\Sigma^*$ to $\mathcal{A}$. Any word of $\Sigma^*$ is called a \textit{node} of a tree $t$, and $\epsilon$ corresponds to its root. Each node $x \in \Sigma^*$ has children $xi$, with $i \in \Sigma$, and the label at node $x$ of $t$ is denoted by $t_x$. A labeled node is at \textit{level} $n$ of a tree if the length of the correspondent node is $n$. We use the notation $wx$ to represent the concatenation of the words $w$ and $x$, and $z^j = \underbrace{z \dots z}_{j \text{ times}}$, i.e., $z^j$ denotes the concatenation of $z \in \Sigma^*$ $j$ times.

We define $\mathcal{T} (\mathcal{A})$ as the set $\mathcal{A}^{\Sigma^*}$ of all $k$-trees over $\mathcal{A}$. In particular, if $k = 2$ we say that $t \in \mathcal{T}(\mathcal{A})$ a \textit{binary tree}. For $t, t^\prime \in \mathcal{T}(\mathcal{A})$ we define the metric $\dd (t, t^\prime) = \frac{1}{2^\ell}$, where $\ell$ is the shorter length of a word $x$ in $\Sigma^*$ such that $t_x \neq t_{x}^\prime$. It is clear that $\dd(t, t) = 0$. This metric induces in $\mathcal{T} (\mathcal{A})$ a topology that is equivalent to the usual product topology, which guarantees that $\mathcal{T}(\mathcal{A}) $ is a compact set. For each $i \in \Sigma$, define the $i$\textit{-th shift map} $\sigma_i$ from $\mathcal{T}(\mathcal{A})$ to itself such that the image of $t$ is the tree rooted at its $i$-th child, that is, $\sigma_i(t)_x = t_{ix}$ for all $x\in\Sigma^*$. The compact metric  space $\mathcal{T}(\mathcal{A})$ equipped with the maps $\sigma_i$ is called the \textit{full tree-shift} over $\mathcal{A}$.

A \textit{block of length} $n$, denoted by $\Delta_n = \cup_{i=0}^n \Sigma^i$, is the initial subtree of height $n$ of a tree in $\mathcal{T}(\mathcal{A})$, $n \geq 0$. In other words, it is the set of all labeled nodes at level $j$ with $0 \leq j \leq n$. It has a total of $(k^{n+1}-1)/(k-1)$ nodes. A \textit{tree-subshift} $ X $ of $ \mathcal{T}(\mathcal{A}) $ is the set $ X_{\mathcal{F}} $ of all trees that do not contain any block of a set of \textit{forbidden blocks} $\mathcal{F}$; in this case, $ \sigma_i(X) \subset X $ for each $i \in \Sigma$. A block is \textit{allowed} (or admissible) if it is not forbidden. If $ \mathcal{F} $ is a finite set, we say that $ X_{\mathcal{F}} $ is a \textit{tree-subshift of finite type} and we can assume that all of its blocks have the same length. In the case that this length is one, $X_{\mathcal{F}}$ is a \textit{Markov tree-subshift}. This study considers a specific class of Markov tree-subshifts: all the allowed transitions can be established by $k$ matrices of order $d$ whose entries are $0$ or $1$. 

For any nonnegative integer $m$, denote by $\mathcal{L}_m(X)$ the set of all allowed blocks in $X$ of length $m$. Define a function $ \phi : \mathcal{L}_m(X) \rightarrow \mathcal{A}^\prime $, where $ \mathcal{A}^\prime $ is a finite alphabet. For any $t \in X $, denote by $ b(x) \in \mathcal{L}_m(X) $ the block of length $ m $ of $t$ rooted at $ x \in \Sigma^* $. We say that a function $ \Phi : X \rightarrow \mathcal{T} (\mathcal{A}^\prime) $ is a $ m $\textit{-block map} given by $\phi$ if $ \Phi(t)_x = \phi(b(x)) $ for all $ x \in \Sigma^* $ and $ t \in X $.

Now, suppose that $ X = X_{\mathcal{F}} $ is a tree-shift of finite type and let $\tilde{m}$ be the length of the blocks of $\mathcal{F}$. In particular, $ \mathcal{L}_{\tilde{m}}(X) $ is a set of blocks that determine all trees $ t \in X $. Therefore, we can construct $ \phi : \mathcal{L}_{\tilde{m}}(X) \rightarrow \mathcal{A}^\prime $ bijective for a suitable $\mathcal{A}^\prime$ and a $\tilde{m}$-block map $ \Phi : X \rightarrow Y $, where $ Y = \Phi(X) \subset \mathcal{T} (\mathcal{A}^\prime) $. In this case, we say that $X$ and $Y$ are \textit{conjugated}. Moreover, $Y$ is a Markov tree-shift given by $k$ transition matrices, and the allowed transitions in each direction of trees in $Y$ are constructed in terms of the possible overlaps between admissible blocks of $X$. In this text, a transition matrix $M$ is a square matrix with all entries $M(i,j) \in \{0, 1\}$. Thus it is proved the following result:

\begin{proposition}\label{PropConjugationTreeShiftsFiniteTypeandMarkov}
Any tree-shift of finite type consisting of $k$-trees, up to an alphabet change by a suitable $m$-block map, is a Markov tree-shift whose allowed blocks can be characterized by matrices $A_1, \dots, A_k$, all with the same order.
\end{proposition}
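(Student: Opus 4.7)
The plan is to carry out explicitly the construction already sketched in the paragraph immediately preceding the statement. Starting from an SFT $X = X_{\mathcal{F}}$ whose forbidden blocks all have length $m$, I would let $\mathcal{A}' = \mathcal{L}_m(X)$ and let $\phi:\mathcal{L}_m(X)\to\mathcal{A}'$ be the identity (or, equivalently, any bijection onto an abstract alphabet of the same size $d=|\mathcal{L}_m(X)|$). This defines the $m$-block map $\Phi:X\to\mathcal{T}(\mathcal{A}')$ by $\Phi(t)_x=\phi(b(x))$, where $b(x)$ is the sub-block of $t$ of height $m$ rooted at $x$. Because $t\in X$ contains no forbidden block, every $b(x)$ lies in $\mathcal{L}_m(X)$, so $\Phi$ is well defined. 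Set $Y=\Phi(X)$.

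Next I would exhibit the $k$ transition matrices. For each direction $i\in\Sigma$, define a $d\times d$ matrix $A_i$ indexed by $\mathcal{A}'$ by setting $(A_i)_{B,B'}=1$ if and only if $B,B'\in\mathcal{L}_m(X)$ are geometrically compatible in direction $i$, meaning that the sub-block of $B$ of height $m-1$ rooted at the $i$-th child of the root of $B$ coincides with the sub-block of $B'$ of height $m-1$ rooted at the root of $B'$. I would then check that for every $t\in X$ and every $x\in\Sigma^{*}$, the pair $(b(x),b(xi))$ satisfies this compatibility; this is tautological, since the two blocks are, by construction, overlapping pieces of the same tree $t$. Consequently, $\Phi(t)$ always lies in the Markov tree-shift $Y_M\subset\mathcal{T}(\mathcal{A}')$ determined by the matrices $A_1,\dots,A_k$, so $Y\subset Y_M$.

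To close the argument, I would establish the reverse inclusion $Y_M\subset Y$ together with the conjugacy. Given $s\in Y_M$, define $t\in\mathcal{T}(\mathcal{A})$ by $t_x=r(s_x)$, where $r(B)$ is the root label of the block $B$. Using the Markov conditions encoded by the $A_i$, a straightforward induction on the depth of a node shows that $b(x)=s_x$ for every $x\in\Sigma^{*}$, so in particular every $b(x)\in\mathcal{L}_m(X)$ and no forbidden block appears in $t$; thus $t\in X$ and $\Phi(t)=s$. This simultaneously yields $Y=Y_M$, surjectivity of $\Phi$, and injectivity: the assignment $s\mapsto t$ is the desired inverse and, being a $1$-block map, is continuous. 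Hence $\Phi$ is a conjugacy onto the Markov tree-shift specified by $A_1,\dots,A_k$.

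The main obstacle is the verification in the last step: one must confirm that the purely local compatibility encoded in the $A_i$ forces the global consistency required for the prescribed blocks $\{s_x\}$ to assemble into a genuine tree. The key point is that adjacent overlaps already have depth $m-1$, so the induction propagating compatibility from a node to its children is clean and forces uniqueness at every level. Once this is in place, no further combinatorial analysis is needed and the proposition follows.
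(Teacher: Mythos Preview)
Your proposal is correct and follows exactly the approach the paper uses: the paper's proof is entirely contained in the short paragraph preceding the proposition, which simply asserts that taking $\phi:\mathcal{L}_m(X)\to\mathcal{A}'$ bijective and forming the $m$-block map $\Phi$ yields a conjugate Markov tree-shift. Your write-up is a faithful and considerably more detailed execution of that same higher-block recoding, explicitly supplying the transition matrices $A_i$ via the overlap condition and verifying that the $1$-block inverse $s\mapsto t$ reconstructs a tree in $X$; the paper leaves all of this implicit.
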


In
%Proposition 4.2 chineses
\cite{AubrunBeal,BanChangChaos} the authors considered $\phi$ onto but not necessarily injective. To relate the entropy of a tree-shift of finite type and the entropy of the conjugated Markov tree-shift given by transition matrices, it is apt to consider a bijection between $\mathcal{L}_{\tilde{m}}(X)$ and a new alphabet $\mathcal{A}^\prime$. In \cite{BanChangChaos}, it is introduced an equivalent result to Proposition \ref{PropConjugationTreeShiftsFiniteTypeandMarkov} in terms of graphs.

When $X$ is a tree-shift given by matrices as in Proposition \ref{PropConjugationTreeShiftsFiniteTypeandMarkov}, we denote $ X = (A_1, \dots, A_k) $. We assume that each row and column of $ A_1, \dots, A_k $ has at least one $1$. With this, all elements in $\mathcal{A}$ appear on trees of $X$, and from any element of the alphabet, there exists at least one possible transition to another element. This is the meaning of the previously employed expression ``all with the same order'' in the Preposition above, this order being the cardinality of $\mathcal{A}$. 

Any $ d \times d $ transition matrix $A$ is said to be \textit{irreducible} if, for every pair $ i,j \in \{1, \dots, d\}$, there exists $n \geq 1$ such that $A^n(i,j) > 0$, or, in other words, the entry of $A^n$ in row $i$ and column $j$ is strictly positive. If there exists an $n$ satisfying this property for all $i, j$, then $A$ is called \textit{aperiodic}.

\section{Characterization of the complexity function}\label{sectionmethod}

In this section we analyze the complexity function of tree-shifts consisting of $k$-trees, which requests a way of counting the number of allowed configurations with $n$ levels, $n \in \N$. Following \cite{PetersenSalama1} we use a recursive method to obtain this number, that is itself a dynamical system.

Let $ X $ be a tree-shift over the alphabet $ \mathcal{A} = \{0, \dots, d - 1\} $ and define $ p $ the \textit{complexity function} of $X$ as $ p(n) = \# \mathcal{L}_n(X) $, $ n \geq 0 $. In essence, $p(n)$ is the number of allowed blocks of length $n$ of trees in $X$. This function is crucial to determine the entropy of $ X $, defined by Ban and Chang \cite{BanChangEntropy}, and Petersen and Salama \cite{PetersenSalama2}, respectively, by
\begin{equation}\label{EquationEntropies}
   h_{BC}(X) = \lim\limits_{n\rightarrow\infty} \dfrac{\log\log p(n)}{n}
\quad \quad \quad \text{and} \quad \quad \quad
h_{PS}(X) = \lim\limits_{n\rightarrow\infty} \dfrac{\log p(n)}{1 + k + \dots + k^n}.
\end{equation}
We present a function $ f $ and relate its $n$-th iterated to $p(n)$, to provide a method to calculate $ h_{BC} $ and $ h_{PS} $ in the case where $ X $ is a Markov tree-shift given by $k$ transition matrices.

Henceforth, we consider $ X = (A_1, \dots, A_k)$. Define a dynamical system $ f: \RR_+^d \rightarrow \RR_+^d $ by the equation
$$
f(x) = (A_1 x)*(A_2 x) * \dots * (A_k x),
$$
where $*$ denotes the product $ (x_1, x_2, \dots, x_d) * (y_1, y_2, \dots, y_d) = (x_1 y_1, x_2 y_2, \dots, x_d y_d) $. Hence,
\begin{equation}\label{eqi-thCoordinateFunctionf}
f(x)_i =
(A_1x)_i \dots (A_kx)_i =
\left(\sum_{j=1}^d (A_1)_{ij} x_j\right) \dots \left(\sum_{j=1}^d (A_k)_{ij} x_j\right),
\end{equation}
where $ f(x)_i $ denotes the $i$-th coordinate of $ f(x)$. The function $f$ an homogeneous of degree $k$, since, for any positive real number $\lambda$, we have $ f(\lambda x) = \lambda^k f(x) $.

Consider the equivalence classes defined by the straight lines passing through the origin: we identify the vectors $x$ and $y$ if there exists some positive $\lambda$ such that $x = \lambda y$. Since $f$ is homogeneous it preserves the equivalence classes and we can observe the behavior of the dynamics of $f$ on the quotient space. In this space, for $x_d \neq 0$, we use the change of coordinates $ \eta_i = \frac{x_i}{x_d} $, $ i = 1, \dots, d $, and for $ \eta = (\eta_1,\dots,\eta_{d-1},1) $ we define $ N : \RR^d_+ \rightarrow \RR_+ $ and $ F : \RR^d_+ \rightarrow \RR^d_+ $ by
$$
N(\eta) =
\left(\displaystyle\sum\limits_{j=1}^d (A_1)_{dj}\eta_j\right) \left(\displaystyle\sum\limits_{j=1}^d (A_2)_{dj}\eta_j\right)  \dots  \left(\displaystyle\sum\limits_{j=1}^d (A_k)_{dj}\eta_j\right)
$$
and
$$
F(\eta)_i = \dfrac{f(x)_i}{f(x)_d} =
\dfrac{\left( \displaystyle\sum\limits_{j=1}^d (A_1)_{ij}\eta_j \right)\dots\left( \displaystyle\sum\limits_{j=1}^d (A_k)_{ij}\eta_j \right)}{N(\eta)} ,
$$
where $ i = 1,\dots,d $. Notice that $ F(\eta) = (F(\eta)_1, \dots, F(\eta)_{d-1},1) $ by definition. We then obtain, for $ x =(x_1, \dots, x_d) \in \RR^d_+ $,
$$
f(x) = f(x_d \eta) = x_d^k N(\eta) F(\eta),
$$
and, inductively, for all $ n \in \N $,
$$
f^n(x) = f^n (x_d \eta) = x_d^{k^n} N(\eta)^{k^{n-1}} N(F(\eta))^{k^{n-2}} N(F^2(\eta))^{k^{n-3}} \dots N(F^{n-1}(\eta)) F^n(\eta).
$$

Considering the initial condition as $ x_1 = \dots = x_d = 1 $, we get $ \eta_i = 1 $ for $ i = 1, \dots, d $ and
\begin{equation}\label{eqdefinitionfunctionf}
 f^n(\mymathbb{1}) = N(\mymathbb{1})^{k^{n-1}} N( F(\mymathbb{1}))^{k^{n-2}} N( F^2(\mymathbb{1}))^{k^{n-3}} \dots N(F^{n-1}(\mymathbb{1})) F^n(\mymathbb{1}),
\end{equation}
where $ \mymathbb{1} = (1,\dots,1) \in \RR^d_+ $.

The following result uses arguments inspired in Theorem 1.6 of \cite{BanChangEntropy} to prove how the iterates of $f$ are useful to determine the complexity function of a tree-shift $X = (A_1, \dots, A_n)$.

\begin{theorem}\label{Prop-p(n)=f^n}
For all $ n \geq 1 $ we have $ p(n) = \|f^n(\mymathbb{1})\| := \sum_{j=1}^d f^n(\mymathbb{1})_j $, where $f^n(\mymathbb{1})$ is given in equation \eqref{eqdefinitionfunctionf}.
\end{theorem}

\begin{proof}
First, we calculate the number of blocks of length $1$ with root $i \in \mathcal{A}$. Identify the leftmost child of the root of a block as the first child, and proceed in this way until the rightmost node, considered the $k$-th child. The number of possible labels to the $\ell$-th child is given by the sum of the $i$-th row of $A_\ell$. Multiplying the possibilities for all $ k $ children, we obtain the number of allowed blocks of length $1$ with root $i$ as
$$
(A_1\mymathbb{1})_i \dots (A_k\mymathbb{1})_i = \left(\sum_{j=1}^d (A_1)_{ij}\right) \dots \left(\sum_{j=1}^d (A_k)_{ij}\right) = f(\mymathbb{1})_i,
$$
and it implies that $ \|f(\mymathbb{1})\| = p(1) $.

Analogously, one can calculate the number of allowed blocks of length $ n+1 $ with root $i$ by relating the number of allowed blocks of length $1$ starting on $ i $ and the number of allowed blocks of length $n$ as follows. Let $ \delta_{ij}^m = 1 $ if $ i $ can be followed by $j$ on the $ m $-th children and $ \delta_{ij}^m = 0 $ otherwise, $1 \leq m \leq k$, and denote by $ B_n(j) $ the number of allowed blocks of length $n$ starting on $j$. The number of allowed blocks of length $ n+1 $ starting on $i$ is
\begin{equation}\label{eqInductionHypothesisComplexityFunction}
   B_{n+1}(i) = \left( \displaystyle\sum\limits_{j=1}^d \delta_{ij}^1 B_n(j) \right) \dots \left( \displaystyle\sum\limits_{j=1}^d \delta_{ij}^k B_n(j) \right). 
\end{equation}

Now, suppose that, for some $ n\geq1 $, $ B_n(j) = f^n(\mymathbb{1})_j $ for all $ 1\leq j\leq d $. Since $ \delta_{ij}^m = (A_m)_{ij} $, by equations \eqref{eqi-thCoordinateFunctionf} and \eqref{eqInductionHypothesisComplexityFunction} we get
$$
B_{n+1}(i) = \left( \displaystyle\sum\limits_{j=1}^d (A_1)_{ij} f^n(\mymathbb{1})_j \right) \dots \left( \displaystyle\sum\limits_{j=1}^d (A_k)_{ij} f^n(\mymathbb{1})_j \right)
= f(f^n(\mymathbb{1}))_i = f^{n+1}(\mymathbb{1})_i.
$$
Thereby, since $B_1(j) = f(\mymathbb{1})_j$, $1 \leq j \leq d$, the proof of $ \|f^n(\mymathbb{1})\| = p(n) $ for all $ n\geq1 $ follows by induction.
\end{proof}

By Theorem \ref{Prop-p(n)=f^n}, we can write the definitions of entropy in equation \eqref{EquationEntropies} as
$$
h_{BC}(X) = \lim\limits_{n\rightarrow\infty} \dfrac{\log \log \|f^n(\mymathbb{1})\|}{n}\ \ \ \text{and}\ \ \ h_{PS}(X) = \lim\limits_{n\rightarrow\infty} \dfrac{\log\|f^n(\mymathbb{1})\|}{1 + k + \dots + k^n}.
$$

\begin{remark}\label{RemarkCoordinatesChaange}
One could choose any $ 1 \leq \ell < d $ to define the change of coordinates $ \tilde{\eta}_i = x_i / x_\ell $, $i = 1, \dots, d$, $x_\ell \neq 0$, and the corresponding functions $ \tilde{N} $ and $ \tilde{F} $ to obtain a different but equivalent expression to $ f^n(\mymathbb{1}) $.
\end{remark}

\begin{remark}\label{RemarkEquivalentTreeShiftsSameEntropy}
Two Markov tree-shifts consisting of $k$-trees that are the same up to an one-to-one correspondence of their alphabets have the same complexity function, and, consequently, the same entropies $ h_{BC} $ and $ h_{PS} $. Tree-shifts determined by the same $k$ matrices but in different order also have the same entropy.
\end{remark}

\section{Comparing the entropies $ h_{BC} $ and $ h_{PS} $ for tree-shifts}

In this section, we aim to show that $ h_{PS} $ is not preserved by conjugation and we give a characterization to the relation between the entropy $h_{PS}$ of a tree-shift of finite type and its conjugated Markov tree-shift. Moreover, we give upper bounds to both definitions of entropy for any tree-shift, in relation to $k$ and $d$.

\begin{theorem}\label{TheoNotTopologicalInvariant}
The entropy $ h_{PS} $ is not a topological invariant.
\end{theorem}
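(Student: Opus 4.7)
The plan is to exhibit two conjugate tree-shifts whose $h_{PS}$ values differ. The cleanest target is the pair $(X,Y)$ where $X$ is a full tree-shift and $Y$ is its higher-block representation; conjugation is built in (by the Proposition in Section~2), so the whole task reduces to comparing the two complexity functions.

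First I would take the simplest nontrivial case: $X = (A,A)$ on $\mathcal{A} = \{0,1\}$, where $A$ is the $2\times 2$ matrix of ones, so $X = \mathcal{T}(\mathcal{A})$. Since every block of height $n$ is allowed, $p_X(n) = 2^{2^{n+1}-1}$, and a direct computation gives $h_{PS}(X) = \log 2$. Next I would let $\Phi \colon X \to Y$ be the $2$-block map that sends $t \in X$ to the tree whose label at $x$ is the block of $t$ of height $1$ rooted at $x$; this $\Phi$ is a conjugation onto a Markov tree-shift $Y$ over the alphabet $\mathcal{A}' = \mathcal{A}^{3}$ of all admissible height-$1$ blocks. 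The critical observation is that a block of height $n$ in $Y$ encodes a block of height $n+1$ in $X$, so $p_Y(n) = p_X(n+1) = 2^{2^{n+2}-1}$, and therefore
\[
h_{PS}(Y) = \lim_{n\to\infty} \frac{(2^{n+2}-1)\log 2}{2^{n+1}-1} = 2\log 2 \neq \log 2 = h_{PS}(X).
\]
Since $X$ and $Y$ are conjugate, this is the desired contradiction to topological invariance.

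The step I expect to need the most care is justifying $p_Y(n) = p_X(n+1)$. It is not quite $p_Y(n) = p_X(n)$ because each letter of $\mathcal{A}'$ already carries one level of information from $X$, so the block map consumes one layer; I would spell out the bijection between admissible height-$n$ blocks of $Y$ and admissible height-$(n+1)$ blocks of $X$ explicitly, using that $\Phi$ is bijective and that its inverse is given by reading off root labels. Once this is in place the entropy computation is immediate, and the same recipe actually yields a general statement: if $X$ is any Markov tree-shift and $Y$ is an $m$-block recoding with $m>1$, then $p_Y(n) = p_X(n+m-1)$, which rescales $h_{PS}$ by a factor of $k^{m-1}$. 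The main obstacle is purely bookkeeping about heights in the block map; there is no dynamical subtlety.
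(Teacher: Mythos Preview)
Your approach is correct and is essentially the same as the paper's: both use the higher-block recoding to produce a conjugate Markov tree-shift and observe that the complexity functions satisfy $p_Y(n) = p_X(n+s)$, giving $h_{PS}(Y) = k^s\, h_{PS}(X)$. The paper states this for a general SFT with forbidden blocks of length $s$, while you work out the concrete instance $X=(A,A)$ with $s=1$ and then note the general rescaling at the end, which is exactly the paper's computation.
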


\begin{proof}
Consider $ X = X_{\mathcal{F}} $ a tree-shift of finite type consisting of $k$-trees and $ s $ the length of all forbidden blocks of $ \mathcal{F} $. The allowed blocks of $ X $ can be described in terms of a finite set $ D $ consisting of blocks of length $s$. Let $ \mathcal{A}^\prime = \{0,\dots,|D|-1\} $ be an alphabet and $ Y \subset \mathcal{T}(\mathcal{A}^\prime) $ the Markov tree-shift such that $ \Phi : X \rightarrow Y $ is a conjugation defined by a bijection $ \phi : D \rightarrow \mathcal{A}^\prime $. Denote by $ p_X $ and $ p_Y $ the complexity functions of $X$ and $Y$, respectively. For all $ n \geq 1 $,
$$
p_X (n+s) = p_Y (n).
$$

Using the relation $1 + k + \dots + k^n = (k^{n+1}-1)/(k-1)$ and the fact that $Y$ also consists of $k$-trees, we get
$$
h_{PS}(Y)
= \lim\limits_{n\rightarrow\infty} \dfrac{k-1}{k^{n+1}} \log p_Y(n)
= k^s \lim\limits_{n\rightarrow\infty} \dfrac{k-1}{k^{n + s + 1} } \log p_X(n+s)
= k^s \ h_{PS}(X).
$$
Whenever $ h_{PS}(X) \neq 0 $ we have $ h_{PS}(X) \neq h_{PS}(Y) $. 
\end{proof}

\begin{remark}\label{Remarkh_BCisInvariantFiniteType}
Let $X$ be a tree-shift of finite type consisting of $k$-trees with $h_{BC}(X) = \theta$ for some $\theta \geq 0$ and suppose that $X$ is conjugated to a tree-shift $Y$ by a function $\Phi$ as defined previously. Then $h_{BC}(X) = h_{BC}(Y)$.
\end{remark}

\begin{proof}
Suppose that, as well as in Theorem \ref{TheoNotTopologicalInvariant}, we have $p_X(n+s) = p_Y(n)$ for some $s \geq 1$. Given $\varepsilon > 0$, there exists $n_0 \in \N$ such that, for $n > n_0$,
$$
e^{n(\theta - \varepsilon)} < \log p_X (n) < e^{n(\theta + \varepsilon)}.
$$
Then, $h_{BC}(Y) = \lim\limits_{n\rightarrow \infty} \frac{1}{n} \log \log p_X(n+s) \leq \theta + \varepsilon$ and $h_{BC}(Y) \geq \theta - \varepsilon$. Since $\varepsilon > 0$ is arbitrarily small, we have $h_{BC}(X) = h_{BC}(Y)$.
\end{proof}

\begin{remark}
%\textcolor{blue}{
Aubrun and Béal presented in \cite{AubrunBeal} an example of conjugacy between two Markov tree-shifts showing that $ \phi $ does not necessarily need to be bijective. In the general case (for conjugation maps such that $\phi$ is not bijective), it is not clear how to compare the complexity functions of two conjugated tree-shifts. Therefore, it is not determined whether $h_{BC}$ is a topological invariant or not.%}
\end{remark}

We now establish a difference between $h_{BC}$ and $h_{PS}$ in regard to their upper bonds.

\begin{proposition}\label{PositivePSentropyImpliesBCTotalEntropy}
Let $X$ be a tree-shift consisting of $k$-trees over $ \mathcal{A} = \{0,\dots,d-1\} $. Then
$$
0 \leq h_{PS}(X) \leq \log d
\ \ \ \text{and}\ \ \
0 \leq h_{BC}(X) \leq \log k.
$$
Also, whenever $h_{PS}(X) > 0$, we get $h_{BC}(X) = \log k $.
\end{proposition}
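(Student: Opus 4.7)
The plan is to read off both upper bounds from the trivial estimate $p(n)\le d^{|\Delta_n|}$, where $|\Delta_n|=1+k+\cdots+k^n = \frac{k^{n+1}-1}{k-1}$ is the number of nodes in a block of length $n$, and then exploit this to show that any positive $h_{PS}$ forces exponential (in $k^n$) growth of $\log p(n)$, which in turn saturates $h_{BC}$.

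First I would note that non-negativity is immediate from $p(n)\ge 1$ (the empty set of forbidden blocks aside, $X$ is nonempty by hypothesis on the matrices). For the $h_{PS}$ upper bound, since every labelling of the $|\Delta_n|$ nodes takes one of $d$ values,
\[
p(n)\ \le\ d^{\,1+k+\cdots+k^{n}},
\qquad\text{so}\qquad
\frac{\log p(n)}{1+k+\cdots+k^{n}}\ \le\ \log d,
\]
and passing to the limit gives $h_{PS}(X)\le\log d$. For $h_{BC}$, take one more logarithm:
\[
\log\log p(n)\ \le\ \log\!\left(\tfrac{k^{n+1}-1}{k-1}\right)+\log\log d,
\]
and dividing by $n$ and letting $n\to\infty$ produces $h_{BC}(X)\le \log k$.

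Next I would prove the final implication. Suppose $h_{PS}(X)=c>0$. By the definition of the limit, for all $n$ large enough,
\[
\log p(n)\ \ge\ \tfrac{c}{2}\,(1+k+\cdots+k^{n})\ \ge\ \tfrac{c}{2}\,k^{n}.
\]
Taking logarithms once more yields $\log\log p(n)\ge \log(c/2)+n\log k$, so
\[
\liminf_{n\to\infty}\frac{\log\log p(n)}{n}\ \ge\ \log k.
\]
Combined with the upper bound $h_{BC}(X)\le \log k$ already established, this pins $h_{BC}(X)=\log k$.

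There is no real obstacle; the whole argument is a direct manipulation of the defining limits, and the only conceptual point is that the double logarithm in $h_{BC}$ converts the natural exponential scale $k^n$ of $\log p(n)$ into linear growth in $n$. In particular, the same computation shows that to distinguish between tree-shifts with $h_{BC}=\log k$ one must look beyond $h_{BC}$ and use $h_{PS}$ (or some finer invariant), motivating the comparison carried out in the rest of this section.
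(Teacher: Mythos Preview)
Your proof is correct and, for the upper bounds, more direct than the paper's. Where you invoke the crude estimate $p(n)\le d^{|\Delta_n|}$ and read off both inequalities in one line, the paper instead computes $h_{PS}$ and $h_{BC}$ explicitly for the full tree-shift (all transition matrices equal to the all-ones $d\times d$ matrix) via the $f$, $F$, $N$ machinery of Section~4, obtaining $\|f^n(\mymathbb{1})\|=d^{1+k+\cdots+k^n}$, and then argues that any tree-shift over $\mathcal{A}$ sits inside the full one. Your route bypasses that machinery entirely and makes transparent that the bounds are purely combinatorial facts about the size of $\Delta_n$; the paper's route, on the other hand, doubles as a sanity check that the recursive formula \eqref{eqdefinitionfunctionf} reproduces the obvious count in the unconstrained case.

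For the implication $h_{PS}(X)>0\Rightarrow h_{BC}(X)=\log k$, the two arguments are essentially the same: the paper writes $\log p(n)=\frac{k^{n+1}}{k-1}(\log c+\varepsilon_n)$ with $\varepsilon_n\to 0$ and substitutes into the definition of $h_{BC}$, while you use the one-sided bound $\log p(n)\ge\frac{c}{2}k^n$ together with the already-established upper bound. Both amount to observing that $\log p(n)$ grows like $k^n$.

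One small quibble: ``$p(n)\ge 1$'' alone does not justify $h_{BC}(X)\ge 0$, since $\log\log 1=-\infty$. What you actually need is $p(n)\ge 2$, which follows from $d\ge 2$ together with the standing hypothesis that every column of each $A_i$ has a nonzero entry (so every symbol occurs as the root of some tree in $X$).
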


\begin{proof}
Let $Y$ be the tree-shift over $\mathcal{A}$ given by the matrices
$$
A_1 = A_2 = \dots = A_k  =
\left[
  \begin{array}{ccc}
    1 & \dots  &   1   \\
    \vdots & \ddots & \vdots \\
    1 & \dots &   1
  \end{array}
  \right].
$$
Then, $ F^n(\eta)_i = 1 $ for all $ 1 \leq i \leq d $, $ n \in \N $ and $ N(\eta) = (\eta_1 + \dots + \eta_{d-1} + 1)^k $, what implies that $ N(F^n(\mymathbb{1})) = d^k $ for all $ n $. It yields
$$
\|f^n(\mymathbb{1})\| = d^{k^n} d^{k^{n-1}} \dots d^k d.
$$
As a consequence,
$$
h_{PS}(X) = \lim\limits_{n\rightarrow\infty} \left( \dfrac{1+k+\dots+k^n}{1+k+\dots+k^n} \log d \right) = \log d
$$
and
$$
h_{BC}(X) = \lim\limits_{n\rightarrow\infty} \dfrac{1}{n} \log\log (d^{k^n}d^{k^{n-1}}\dots d^k d) = \lim\limits_{n\rightarrow\infty} \dfrac{1}{n} \log \left( \dfrac{k^{n+1}-1}{k-1} \log d \right) = \log k.
$$
The equation $h_{BC}(X) = \log k$ is exposed in \cite{BanChangEntropy} in other terms. Notice that $Y$ is the full tree-shift, therefore, it has the maximum entropy in both cases since the matrices considered generate the maximum number of allowed blocks of length $1$. Id est, any tree-shift $X$ over the same alphabet is a restriction of $Y$, and consequently, has at most the same number of allowed blocks of length $n$ as $Y$, for each $n \in \N$.

Now, suppose that $X$ is a tree-shift such that
$$
h_{PS}(X) = \lim\limits_{n\rightarrow\infty} \dfrac{k-1}{k^{n+1}} \log \|f^n(\mymathbb{1})\| = c > 0.
$$
By the definition of limit, for any $\varepsilon > 0$ there exists $n_0 \in \N$ such that, for $n > n_0$,
$$
\dfrac{k^{n+1}}{k-1} (c - \varepsilon) < \log \| f^n(\mymathbb{1}) \|.
$$
%for all $n > n_0$. 
For that reason, we get
$$
h_{BC}(X) \geq \lim\limits_{n \rightarrow \infty} \dfrac{1}{n} \log \left( \dfrac{k^{n+1}}{k-1}(c - \varepsilon) \right) = \log k.
$$
Since $\log k$ is an upper bound for $h_{BC}(X)$, the proof is thus shown.
\end{proof}

\begin{remark}
    The previous result does not require $X$ to be a tree-shift of finite type.
\end{remark}

By Proposition \ref{PositivePSentropyImpliesBCTotalEntropy} there is no interest in calculating $h_{BC}$ for tree-shifts with positive $h_{PS}$. It is thus natural to question the behavior of $h_{BC}$ for tree-shifts with $h_{PS} = 0$. In \cite{BanChangEntropy} the authors establish a method to generate tree-shifts of finite type with entropy $h_{BC}$ in the interval $(0, \log k)$ in terms of recurrence sequences. Since they did not construct explicitly the allowed blocks of a tree-shift satisfying their conditions, we do so, in the aim of showing that we should not ignore the class of tree-shifts with zero $h_{PS}$.

\begin{example}\label{ExampleZerohPS-PositivehBC}
Inspired by Example 3.2 of \cite{BanChangEntropy}, we consider $X$ the tree-shift determined by the following admissible blocks:

\begin{center}
\begin{forest}
[{$0$}
[{$0$}] [{$1$}]]
\end{forest}
 \ \ \ \ \ \ \ \ 
\begin{forest}
[{$0$}
[{$1$}] [{$0$}]]
\end{forest}
 \ \ \ \ \ \ \ \ 
\begin{forest}
[{$1$}
[{$0$}] [{$2$}]]
\end{forest}
 \ \ \ \ \ \ \ \ 
\begin{forest}
[{$2$}
[{$2$}] [{$2$}]]
\end{forest}
\end{center}

This tree-shift satisfies the recurrence equations of the example in question, with $a^{(0)} = 0$, $a^{(1)} = 1$ and $b = 2$, therefore, we should have $h_{BC}(X) = \log \frac{1 + \sqrt{5}}{2}$. We indicate explicitly how to use the method presented in Section \ref{sectionmethod} to find this value. Associate the first block with $0^\prime$, the second block with $1^\prime$, and the third and fourth blocks with $2^\prime$ and $3^\prime$, respectively. Consider the new alphabet $\mathcal{A}^\prime = \{ 0^\prime, 1^\prime, 2^\prime, 3^\prime \}$. Notice that $X$ is conjugated to the Markov tree-shift $Y = (A_1, A_2)$ over $\mathcal{A}^\prime$, with
$$
A_1 =
\begin{pmatrix}
    1 & 1 & 0 & 0 \\
    0 & 0 & 1 & 0 \\
    1 & 1 & 0 & 0 \\
    0 & 0 & 0 & 1
\end{pmatrix}
\ \ \text{ and }\ \ 
A_2 = 
\begin{pmatrix}
    0 & 0 & 1 & 0 \\
    1 & 1 & 0 & 0 \\
    0 & 0 & 0 & 1 \\
    0 & 0 & 0 & 1
\end{pmatrix}.
$$
The eigenvalues of $A_1$ are $0, 1, \frac{1 + \sqrt{5}}{2}$ and $\frac{1 - \sqrt{5}}{2}$ and the eigenvalues of $A_2$ are $0$ and $1$.

For $Y$, we obtain $f(x_1, x_2, x_3, x_4) = ((x_1 + x_2) x_3, (x_1 + x_2) x_3, (x_1 + x_2) x_4, x_4^2)$ and, considering $\eta_i = \frac{x_i}{x_4}$, we get $F(\eta_1, \eta_2, \eta_3, 1) = ((\eta_1 + \eta_2) \eta_3, (\eta_1 + \eta_2) \eta_3, \eta_1 + \eta_2, 1)$ and $N(\eta) = 1$. Denote $ (\Fib(n))_{n \geq 1} = (1,1,2,3,5,8,\dots) $ the Fibonacci sequence. Then, $F^n(1)_1 = 2^{\Fib(n+3) - 2}$ and $F^n(1)_3 = 2^{\Fib(n+2)-1}$. It yields 
$$
\| f^n(\mymathbb{1}) \| = 2^{\Fib(n+3) - 1} + 2^{\Fib(n+2) - 1} + 1.
$$
Using the fact that $2^{\Fib(n+3) - 1} \leq \| f^n(\mymathbb{1}) \| \leq 2^{\Fib(n+3)}$, we prove that $h_{PS}(Y) = 0$ and $h_{BC}(Y) = \log \frac{1 + \sqrt{5}}{2}$. By Remark \ref{Remarkh_BCisInvariantFiniteType}, $h_{BC}(X) = h_{BC}(Y)$ as desired. 
\end{example}

\section{Upper and lower bounds for the entropy $h_{PS}$ of Markov tree-shifts}

In the previous section we presented an upper bound for the entropy $ h_{PS} $ of any tree-shift. In this section, we consider a tree-shift of the form $ X = (A_1, \dots, A_k) $ to provide a second upper bound and lower bounds depending on some properties of the matrices $ A_1, \dots, A_k $.

First, we fix the vector norm
$$
\| v \|_m  = \|(v_1, \dots, v_d) \|  =  \max_{i \in \{1, \dots, d\}}{|v_i|}
$$
and the corresponding operator norm $\| \cdot  \|_{op} $ for matrices, defined as
$$
  \| M \|_{op} = \sup_{\|v \|_m=1}{\|M v \|_m}.
$$
If $M$ has non-negative entries, this norm corresponds to the
maximum among the sum of the elements in each row of the matrix $ M $.

\begin{remark}\label{RemarkAperiodicTransitionMatrix}
    If $M$ is a $d \times d$ aperiodic transition matrix, then $\| M \|_{op} = \gamma \geq 2$. 
\end{remark}

Considering this norm, we have the following:

\begin{proposition}\label{PropUpperBondEntropyPS}
Let $ X = (A_1,\dots,A_k) $ be a tree-shift over the alphabet $ \mathcal{A} = \{ 0,\dots,d-1 \} $. Then,
$$
   h_{PS}(X) \leq \dfrac{1}{k} ( \log d + \log \|A_1\|_{op} + \dots + \log \|A_k\|_{op} ).
$$
\end{proposition}

\begin{proof}
For a point $ x $ with non-negative entries we have $|(A_ix)_j| \leq \|A_i\|_{op}\ \|x\|_m$. Using the definition of $f$, we obtain $\|f(\mymathbb{1})\| \leq d \ \|A_1\|_{op}\|A_2\|_{op}\dots\|A_k\|_{op}$. Now, by iterating $f$, for each $ n \in \N $ we get
$$
 \| f^n(\mymathbb{1}) \| \leq d^{1+k+\dots+k^{n-1}}\|A_1\|_{op}^{1+k+\dots+k^{n-1}}\dots\|A_k\|_{op}^{1+k+\dots+k^{n-1}}.
$$
Therefore,
$$
h_{PS}(X) \leq \dfrac{1}{k} \big(\log d + \log \|A_1\|_{op} + \dots + \log \|A_k\|_{op} \big).
$$
\end{proof}

Using Propositions \ref{PositivePSentropyImpliesBCTotalEntropy} and \ref{PropUpperBondEntropyPS} for $X = (A_1,\dots,A_k)$ over $\mathcal{A} = \{0,\dots,d-1\}$, we obtain
\begin{equation}\label{eqUpperBoundasaminimum}
    h_{PS}(X) \leq \min \{\log d,\ (\log d + \log \|A_1\|_{op} + \dots + \log \|A_k\|_{op})/k\}.
\end{equation}

\begin{example}
Consider $ X $ the tree-shift given by the matrices
$$
A_1 = \begin{pmatrix}
1 & 1 \\
1 & 1
\end{pmatrix}, \ \ \
A_2, \dots, A_5 = \begin{pmatrix}
1 & 0 \\
0 & 1
\end{pmatrix}.
$$
By Proposition \ref{PropUpperBondEntropyPS},
$$
h_{PS}(X) \leq \dfrac{\log 2 + \log 2 + 4\log 1}{5} = \dfrac{2}{5} \log 2 < \log 2.
$$
This shows that, in this case, Proposition \ref{PropUpperBondEntropyPS} provides a better upper bound for the entropy of $X$ than Proposition \ref{PositivePSentropyImpliesBCTotalEntropy}. However, if all $A_1, \dots, A_5$ had norm $2$, the upper bound given by Proposition \ref{PropUpperBondEntropyPS} would be $ \frac{6}{5} \log 2 $, so, in this case, Proposition \ref{PositivePSentropyImpliesBCTotalEntropy} is more suitable.
\end{example}

Inspired by what is observed in the one-dimensional shift space, one could investigate whether there exists an upper bound for $h_{PS}$ of a tree-shift in terms of the logarithm of the spectral radius of its matrices (corresponding to the entropy of the one-dimensional shift space determined by each matrix). Neither taking the mean of these values nor the minimum (or maximum) work for tree-shifts. For example, we show in Section \ref{sectionentropytable} that the entropy of $X_{11} = (B,E)$ is approximately $0.23435$, while the eigenvalues of $B$ and $E$ are all equal to one. Moreover, the tree-shift in Example \ref{ExampleZerohPS-PositivehBC} has $h_{PS} = 0$ but the maximum of the eigenvalues of $A_1$ and $A_2$ is $\log (1+\sqrt{5})/2$.

Now, let $M, N$ be $d \times d $ transition matrices. We write $M \succcurlyeq N$ if $M_{ij} \geq N_{ij}$ for all $1 \leq i,j \leq d$.

\begin{proposition}\label{EntropyMatrixBiggerEntries}
Let $X = (A_1,A_2)$ and $Y = (A_1,A_3)$ be binary tree-shifts. If $ A_2 \succcurlyeq A_3 $, then $ h_{PS}(X) \geq h_{PS}(Y) $ and $ h_{BC}(X) \geq h_{BC}(Y) $.
\end{proposition}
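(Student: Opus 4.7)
The plan is to exploit monotonicity of the recursion $f$ introduced in Section~4. Let $f_X(x) = (A_1 x)*(A_2 x)$ and $f_Y(x) = (A_1 x)*(A_3 x)$ be the dynamical systems associated with $X$ and $Y$ via Proposition~\ref{Prop-p(n)=f^n}, so that $p_X(n) = \|f_X^n(\mymathbb{1})\|$ and $p_Y(n) = \|f_Y^n(\mymathbb{1})\|$. The goal reduces to showing the pointwise inequality $f_X^n(\mymathbb{1}) \geq f_Y^n(\mymathbb{1})$ for every $n\geq 1$, and then passing to the limit in the definition of $h_{PS}$.

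The key observation is that both maps preserve the componentwise order on $\RR_+^d$. First I would record two elementary monotonicity facts: (i) if $u \geq v \geq 0$ componentwise, then $A_i u \geq A_i v$ componentwise because the $A_i$ have non-negative entries, and consequently $f_X(u) \geq f_X(v)$ and $f_Y(u) \geq f_Y(v)$; (ii) if $x \geq 0$, then $A_2 x \geq A_3 x$ componentwise thanks to the hypothesis $A_2 \geq A_3$, so that $f_X(x) = (A_1 x)*(A_2 x) \geq (A_1 x)*(A_3 x) = f_Y(x)$, since the Hadamard product $*$ with the common non-negative factor $A_1 x$ preserves inequalities.

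With (i) and (ii) in hand, a direct induction on $n$ gives $f_X^n(\mymathbb{1}) \geq f_Y^n(\mymathbb{1})$: the base case $n=1$ is exactly (ii) applied at $x = \mymathbb{1}$, and for the inductive step one writes
\[
f_X^{n+1}(\mymathbb{1}) = f_X\bigl(f_X^n(\mymathbb{1})\bigr) \geq f_X\bigl(f_Y^n(\mymathbb{1})\bigr) \geq f_Y\bigl(f_Y^n(\mymathbb{1})\bigr) = f_Y^{n+1}(\mymathbb{1}),
\]
using (i) for the first inequality and (ii) for the second. Since the norm $\|\cdot\|$ used in Proposition~\ref{Prop-p(n)=f^n} is just the sum of non-negative coordinates, componentwise domination yields $p_X(n) = \|f_X^n(\mymathbb{1})\| \geq \|f_Y^n(\mymathbb{1})\| = p_Y(n)$.

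Finally, taking logarithms, dividing by $1 + k + \cdots + k^n$ (with $k=2$ here) and letting $n \to \infty$ gives $h_{PS}(X) \geq h_{PS}(Y)$. There is no real obstacle in the argument: the only point that deserves care is the verification that the Hadamard product $*$ is compatible with the componentwise order when one factor is held fixed and non-negative, which is immediate but is what makes step (ii) work. The argument generalizes transparently to $k$-trees and to replacing any single matrix $A_j$ by a smaller one, since nothing in the induction used the binary hypothesis.
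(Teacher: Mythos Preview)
Your proof is correct and follows essentially the same approach as the paper's: both arguments establish the coordinatewise inequality $f_X^n(\mymathbb{1}) \geq f_Y^n(\mymathbb{1})$ by monotonicity and then pass to the limit. The paper's version is more terse---it simply asserts that the inequality ``is preserved by the iterates''---whereas you have spelled out the two monotonicity facts (i) and (ii) and the induction step explicitly, but there is no substantive difference in strategy.
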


\begin{proof}
In fact, $ f_X(\mymathbb{1})_i \geq f_Y(\mymathbb{1})_i $ for all $ 1 \leq i \leq d$ by definition, and the inequality is preserved by the iterates of these functions. Consequently,
$$
h_{PS}(X) = \lim\limits_{n\rightarrow\infty} \dfrac{\log \|f_X^n(\mymathbb{1})\|}{1+k+\dots+k^n}
\geq
\lim\limits_{n\rightarrow\infty} \dfrac{\log \|f_Y^n(\mymathbb{1})\|}{1+k+\dots+k^n} = h_{PS}(Y).
$$
Similarly for $h_{BC}$.
\end{proof}

\begin{remark}\label{RemarkComparingEntropiesRestrictingTransitions}
One can prove that, if $ X = (A_1,\dots,A_k) $ and $ Y = (B_1,\dots,B_k) $ are tree-shifts with $ d \times d $ matrices such that $ A_i \succcurlyeq B_i $ for all $ i = 1, \dots, k $, then $ h_{PS}(X) \geq h_{PS}(Y) $. In particular, if $ h_{PS}(Y) > 0 $, we can guarantee that $ h_{PS}(X) > 0 $.
\end{remark}

\begin{remark}
Suppose that $A_i \succcurlyeq M$ for some irreducible $ M $, and define $Z$ the tree-shift whose all $k$ transitions are given by $M$. Then, by \cite{PetersenSalama1,PetersenSalama2} and our previous argument, we get
$$
h_{PS}(X) \geq h_{PS}(Z) \geq h_{top}(\Sigma_M),
$$
where $\Sigma_M$ is the one-dimensional Markov subshift defined by $M$.
\end{remark}

In \cite{BanChangEtAll2022} the authors proved that, if a tree-shift has a single transition matrix for all directions with norm at least two, its entropy is positive. In the case that the allowed transitions of a tree-shift are given by different matrices, this property (norm at least two) being satisfied by all matrices is not sufficient to guarantee positive $h_{PS}$, as can be observed in Example \ref{ExampleZerohPS-PositivehBC}. Moreover, the authors proved that a tree-shift with the same transition matrix for all directions having norm one has zero entropy $h_{PS}$ (and also $h_{BC}$, as a consequence of their proof). This result can be generalized for tree-shifts given by different transition matrices in the following way:

\begin{proposition}\label{PropFiniteTreeShiftsZeroEntropy}
Let $ X = (A_1, \dots, A_k)$ be a tree-shift over the alphabet $ \mathcal{A} = \{0,\dots,d-1\} $ and suppose that $\|A_1\|_{op} = \dots = \|A_k\|_{op} = 1$. Then $ h_{PS}(X) = h_{BC}(X) = 0 $.
\end{proposition}

\begin{proof}
The proof functions on account of the fact that $ p(n) = d $ for all $n \geq 1$.
\end{proof}

Now, we find a lower bound for the entropy of a Markov tree-shift when one of its transition matrices is aperiodic to prove that this tree-shift has positive entropy.

\begin{proposition}\label{PropositionAperiodicMatrixImpliesPositiveEntropy}
Let $X = (A_1, \dots, A_k)$ be a tree-shift such that $A_1$ is aperiodic. Then $h_{PS}(X) > 0$.
\end{proposition}

\begin{proof}
Let $\ell$ be a row of $A_1$ with a row sum of $\gamma \geq 2$, by Remark \ref{RemarkAperiodicTransitionMatrix}. Since $A_1$ is aperiodic, there exists $n_0 \in \N$ such that $A_1^n(i,j) > 0$ for all $n \geq n_0$ and $i,j \in \{1, \dots, d\}$. We want to count the number of nodes of the form $x a_1^{n_0-1}$, $x \in \Sigma^*$, at the last level of a block of length $n$. The expression $a_1^{n_0-1}$ denotes the concatenation of $a_1$ $n_0-1$ times. These nodes allow $t_x t_{x a_1} t_{x a_1^2} \dots t_{x a_1^{n_0-1}}$ to be a word of length $n_0$, constructed using only the matrix $A_1$, and satisfying $t_{xa_1^{n_0}-1} = \ell$, regardless of $t_x$. Then, this word can be expanded to a word of length $n_0 + 1$ in $\gamma$ different ways.

A block of length $n_0 - 1$ has only the node $a_1^{n_0-1}$ on its last level with the desired property (in this case, $t_x = t_\epsilon$), and smaller blocks do not need to be considered. A block of length $n_0$ has $k$ nodes of the form $x_1 a_1^{n_0-1}$, $x_1 \in \Sigma^1 \setminus \{ \epsilon \}$; a block of length $n_0 + 1$ has $k^2$ nodes of the form $x_2 a_1^{n_0-1}$, $x_2 \in \Sigma^2 \setminus \Sigma^1$. By this argument, one can prove that a block of length $n_0 + q - 1$ has exactly $k^q$ nodes of the form $x a_1^{n_0 - 1}$ on its last level.

Therefore, since in a block of length $n_0 + q - 1$ we can fit $k^q$ words $t_x t_{x a_1} t_{x a_1^2} \dots t_{x a_1^{n_0 - 1}}$ independently, we obtain $p(n_0 + q) \geq d \cdot \gamma^{k^q}$. Thus,
$$
h_{PS}(X) = \lim\limits_{q\rightarrow\infty} \dfrac{k-1}{k^{n_0 + q + 1}} \log p(n_0 + q) \geq \dfrac{k-1}{k^{n_0 + 1}} \log \gamma.
$$
We then prove that the entropy $h_{PS}$ of $X$ is strictly positive and provided a lower bound for it.
\end{proof}

Using a similar argument as in Proposition \ref{PropositionAperiodicMatrixImpliesPositiveEntropy}, we can also prove an analogous result assuming a weaker hypothesis:

\begin{proposition}\label{PropositionIrreducibleMatrixNorm2ImpliesPositiveEntropy}
Let $X = (A_1, \dots, A_k)$ be a tree-shift such that $A_1$ is a $d \times d$ irreducible matrix with $\| A_1\|_{op} = \gamma \geq 2$. Then $h_{PS}(X) > 0$.
\end{proposition}

\begin{proof}
Let $1 \leq \ell \leq d$ be a row of $A_1$ with a sum equal to $\gamma$ and fix $i \in \mathcal{A}$ arbitrarily. Since $A_1$ is irreducible, $A_1^d(i, \ell) > 0$ or $A_1^d(i,\ell) = 0$ but $A_1^m(i, \ell) > 0$ for some $m < d$. Let us consider both cases separately.

If $A_1^d(i, \ell) > 0$, there exists at least one admissible block of length $d-1$ in $X$ with root $i$ and $\ell$ at the node $a_1^{d-1}$. Then, this block can be extended to a block of length $d$ in $\gamma$ different ways, at minimum. In the case that $A_1^m(i, \ell) > 0$ for $m < d$, there is at least one admissible block of length $m-1$ with root $i$ and $\ell$ at node $a_1^{m-1}$ and, consequently, no less than $\gamma$ different extensions to a block of length $d$ exist.

Considering both cases previously presented for any $i \in \mathcal{A}$, we can take $d \cdot \gamma$ as a conservative estimate of the number of allowed blocks of length $d$ in $X$. Let us continue with this argument.

A block of length $d+1$ can be seen as $k$ blocks of length $d$ attached at its root, then, for each root $j \in \mathcal{A}$, at least $\gamma$ blocks of length $d$ that can be attached in each direction exist. This implies that $d \cdot \gamma^k$ is a lower bound for the number of admissible blocks in $X$ with length $d+1$. Following this estimate, for any $n \geq d$ one can show that $p(n) \geq d \cdot \gamma ^{k^{n-d}}$. Then,
$$
h_{PS}(X) \geq \lim\limits_{n\rightarrow\infty} \dfrac{k-1}{k^{n+1}} \log \left( d \cdot \gamma^{k^{n-d}} \right) = \dfrac{k-1}{k^{d+1}} \log \gamma.
$$
\end{proof}

An aperiodic matrix is also irreducible, so we get by Propositions \ref{PropositionAperiodicMatrixImpliesPositiveEntropy} and \ref{PropositionIrreducibleMatrixNorm2ImpliesPositiveEntropy} two lower bounds (that can possibly be different) to the entropy $h_{PS}$ of a Markov tree-shift given by $k$ matrices such that one of them is aperiodic. Since, in all cases, the power of an aperiodic $d \times d$ matrix that makes all of its entries strictly positive is less or equal to $d$, the lower bound provided by Proposition \ref{PropositionAperiodicMatrixImpliesPositiveEntropy} can be less than the one obtained using Proposition \ref{PropositionIrreducibleMatrixNorm2ImpliesPositiveEntropy}, as one should expect.

To establish another class of Markov tree-shifts with positive entropy (no matrix is irreducible), we need the following auxiliary result.

\begin{lemma}\label{LemmaPositiveEntropyIdentityMatrices}
    Let $M$ be a $2\times2$ matrix with $\|M\|_{op} = 2$ and $I$ be the $2\times2$ identity matrix. Then, for any $k \geq 2$, the tree-shift $X = (M, I, \dots, I)$ consisting of $k$-trees has a positive $h_{PS}$.
\end{lemma}

\begin{proof}
If $M$ is aperiodic, Proposition \ref{PropositionAperiodicMatrixImpliesPositiveEntropy} yields $h_{PS}(X) > 0$. Still to be considered is $M$ as one of the following matrices:
$$
    \begin{pmatrix}
        1 & 1 \\
        0 & 1
    \end{pmatrix}
    \ \ \ \ \text{ and } \ \ \ \
    \begin{pmatrix}
        1 & 0 \\
        1 & 1
    \end{pmatrix}.
$$
Let us take into account the first case, since both matrices are the same transition matrix up to a correspondence between their alphabet (notice that, in general, considering the second matrix does not generate an equivalent tree-shift as addressing the first matrix, but here we are restricted to the particular case where all the other transition matrices are the identity).

Therefore, take $X = (M, I, \dots, I)$, where $M$ is the left matrix presented above. We have $f(x_1, x_2) = (x_1^{k-1}(x_1 + x_2), x_2^k)$ and, for $\eta = (\eta_1, 1)$ with $\eta_1 = x_1/x_2$, we obtain $N(\eta) = 1$ and $F(\eta) = (\eta_1^{k-1}(\eta_1 + 1), 1)$. It is possible to estimate $p(n)$ as
    $$
    \| f^n(\mymathbb{1})\| = F^n(\mymathbb{1})_1 +1 > 2^{k^{n-1}} + 1,
    $$
so we get
$$
h_{PS}(X) \geq \lim\limits_{n\rightarrow\infty} \dfrac{k-1}{k^{n+1}-1} \log 2^{k^{n-1}} = \frac{k-1}{k^2} \log 2.
$$
Then the result follows.
\end{proof}

\begin{remark}\label{RemarkRestrinctionToAnSubAlphabet}
    If $X$ is a tree-shift over an alphabet $\mathcal{A}$ and $\tilde{\mathcal{A}} \subset \mathcal{A}$, define $X|_{\tilde{\mathcal{A}}}$ the subset of $k$-trees $t$ of $X$ such that $t_x \in \tilde{\mathcal{A}}$ for all $x \in \Sigma^*$. Then, $X|_{\tilde{\mathcal{A}}}$ is a tree-subshift and $h_{PS}(X) \geq h_{PS} \left(X|_{\tilde{\mathcal{A}}} \right)$.
\end{remark}

\begin{proposition}
Let $X = (A_1, \dots, A_k)$ be a tree-shift over $\mathcal{A} = \{0, \dots, d-1\}$. Suppose that there exists $1 \leq \ell \leq d-1$ such that
$$
\tilde{A}_1 = 
\begin{pmatrix}
A_1(\ell, \ell) & A_1 (\ell, \ell+1) \\
A_1(\ell + 1, \ell) & A_1(\ell+1, \ell+1)
\end{pmatrix}
$$
satisfies $\| \tilde{A}_1\| \geq 2$. Define, in a similar way, 
$$
\tilde{A}_m = 
\begin{pmatrix}
A_m(\ell, \ell) & A_m (\ell, \ell+1) \\
A_m(\ell + 1, \ell) & A_m(\ell+1, \ell+1)
\end{pmatrix}
$$
for all $1 < m \leq k$. If $\tilde{A}_m \succcurlyeq I$ for all $m$, where $I$ is the $2 \times 2$ identity matrix, then $h_{PS}(X) > 0$.
\end{proposition}

\begin{proof}
Define $Y = (\tilde{A}_1, I, \dots, I)$ and $\tilde{Y} = (\tilde{A}_1, \tilde{A}_2, \dots, \tilde{A}_k)$ tree-shifts consisting of $k$-trees. By Lemma \ref{LemmaPositiveEntropyIdentityMatrices} we know that $h_{PS}(Y) > 0$ and, by Remark \ref{RemarkComparingEntropiesRestrictingTransitions}, we obtain that $h_{PS}(\tilde{Y}) \geq h_{PS}(Y)$. Using Remark \ref{RemarkRestrinctionToAnSubAlphabet}, we have $h_{PS}(X) \geq h_{PS}(\tilde{Y})$. It ends the proof.
\end{proof}

\section{The entropy of all binary tree-shifts over the alphabet $\{0,1\}$}\label{sectionentropytable}

Inspired by the investigation of several examples exposed in \cite{PetersenSalama1,PetersenSalama2}, in this section we use the expression of $f^n(\mymathbb{1})$, given in equation \eqref{eqdefinitionfunctionf}, and Proposition \ref{Prop-p(n)=f^n} to calculate the entropy of some examples, namely all binary tree-shifts over the alphabet $ \mathcal{A} = \{0,1\} $ whose matrices has no row or column with all entries equal to zero. The remaining cases have trivially zero entropy. Using Propositions \ref{PositivePSentropyImpliesBCTotalEntropy} and \ref{PropFiniteTreeShiftsZeroEntropy} we can easily find $ h_{BC} $ for each case.

Consider matrices $ A - G $ defined as follows: 
$$
A=\begin{pmatrix}
1 & 1 \\
1 & 1
\end{pmatrix}
\ \ \ \
B=\begin{pmatrix}
1 & 0 \\
0 & 1
\end{pmatrix}
\ \ \ \
C=\begin{pmatrix}
0 & 1 \\
1 & 0
\end{pmatrix}
\ \ \ \
D=\begin{pmatrix}
1 & 1 \\
1 & 0
\end{pmatrix}
$$
$$
E=\begin{pmatrix}
1 & 0 \\
1 & 1
\end{pmatrix}
\ \ \ \
F=\begin{pmatrix}
0 & 1 \\
1 & 1
\end{pmatrix}
\ \ \ \
G=\begin{pmatrix}
1 & 1 \\
0 & 1
\end{pmatrix}
$$

We also establish the following notation:
$$
\begin{array}{ccccc}
    X_1=(A,A), & X_2=(A,B), & X_3=(A,C), & X_4=(A,D), & X_5=(A,E), \\
    X_6=(A,F), &
    X_7=(A,G), & X_8=(B,B), & X_9=(B,C), & X_{10}=(B,D), \\
    X_{11}=(B,E), & X_{12}=(B,F), &
    X_{13}=(B,G), & X_{14}=(C,C), & X_{15}=(C,D), \\ X_{16}=(C,E), & X_{17}=(C,F), & X_{18}=(C,G), &
    X_{19}=(D,D), & X_{20}=(D,E), \\ X_{21}=(D,F), & X_{22}=(D,G), & X_{23}=(E,E), & X_{24}=(E,F), &
    X_{25}=(E,G), \\
    &
    X_{26}=(F,F), & X_{27}=(F,G), & X_{28}=(G,G), &  \\
\end{array}
$$

Let us present explicitly the functions $ f $, $ F $ and $ N $ for $ k = d = 2 $. For the binary tree-shift $ X = (P,Q) $, we have
$$
f(x) = (Px) * (Qx)
$$
and, using the coordinates $\eta_i = \frac{x_i}{x_2}$, $i = 1,2$, $x_2 \neq 0$, we get
$$
N(\eta)=(P_{21}\eta_1+P_{22}) (Q_{21}\eta_1+Q_{22})
$$
and
$$
  F(\eta)_i = \frac{ f(x)_i }{f(x)_2} = \frac{(P_{i1}\eta_1+P_{i2}) (Q_{i1}\eta_1+Q_{i2}) }
  {N(\eta)}.
$$

It is equivalent to consider the new coordinates $\tilde{\eta}_i=x_i/x_1$, $ i=1,2 $, $x_1 \neq 0$, with correspondent functions
$$
\tilde{N}(\tilde{\eta}) = (P_{11}+P_{12}\tilde{\eta}_2) (Q_{11}+Q_{12}\tilde{\eta}_2)
$$
and
$$
  \tilde{F}(\tilde{\eta})_i = \frac{ f(x)_i }{f(x)_1} = \frac{(P_{i1}+P_{i2}\tilde{\eta}_i) (Q_{i1}+Q_{i2}\tilde{\eta}_i) }
  {\tilde{N}(\tilde{\eta})}.
$$

Hence
$$
\begin{array}{rl}
     f^n \left(
      \left(
     \begin{array}{c}
       1  \\
        1
     \end{array}
      \right)
    \right)  = & N(\mymathbb{1})^{2^{n-1}}N(F(\mymathbb{1}))^{2^{n-2}} N(F^2(\mymathbb{1}))^{2^{n-3}} \dots N(F^{n-1}(\mymathbb{1}))
     \left(
     \begin{array}{c}
       F^n(\mymathbb{1})_1   \\
        1
     \end{array}
      \right) \\
    = & \tilde{N}(\mymathbb{1})^{2^{n-1}}\tilde{N}( \tilde{F}(\mymathbb{1}))^{2^{n-2}} \tilde{N}(\tilde{F}^2(\mymathbb{1}))^{2^{n-3}} \dots \tilde{N}(\tilde{F}^{n-1}(\mymathbb{1}))
     \left(
     \begin{array}{c}
        1   \\
        \tilde{F}^n(\mymathbb{1})_2
     \end{array}
      \right).
\end{array}
$$

\begin{remark}\label{RemarkCoordinatesChaangeforEta_i}
If $X$ and $Y$ are two tree-shifts such that the functions $F$ and $N$ corresponding to $X$ coincide with $F$ and $N$ or with $\tilde{F}$ and $\tilde{N}$ corresponding to $Y$, by Remark \ref{RemarkCoordinatesChaange} these tree-shifts have the same number of allowed blocks of length $n$, for each $n \in \N$. Moreover, since $d = 2$ these tree-shifts are conjugated.
\end{remark}

In what follows, we omit the subindex of $ \eta_i $, writing $ \eta $ when referring to both $ \eta_1 $ and $ (\eta_1,1) $ and, in a similar way, we write $ 1 $ for $ (1,1) $. It should not cause any confusion when considering the context. Moreover, since $ F (\eta) = (F(\eta)_1,1) $, we also refer to $F(\eta)_1$ simply as $F(\eta)$. Similarly with $\tilde{\eta}$ and $\tilde{F}(\tilde{\eta})$. Also, remember that we denote $ (\Fib(n))_{n \geq 1} = (1,1,2,3,5,8,\dots) $ the Fibonacci sequence.

In Proposition \ref{PositivePSentropyImpliesBCTotalEntropy} we proved that $ h_{PS}(X_1) = \log 2 $.
The remaining cases are studied below.

\subsection{Case 1.} $ X_2 = (A,B) $ and $ X_3 = (A,C) $

For the tree-shift $ X_2 $, we have $ F(\eta) = \eta $ and $ N(\eta) = \eta + 1 $, thus $ N(F^j(1)) = 2 $ for all $j \geq 1$. It implies that
$$
\|f^n(1)\| = 2^{2^{n-1}} 2^{2^{n-2}} \dots 2(1+1),
$$
and, consequently,
$ h_{PS}(X_2) = \frac{1}{2} \log 2 $. A straightforward calculation shows that the functions $F$ and $N$ corresponding to $X_3$ satisfies $N(1) = N(F^j(1)) = 2$ for all $j \geq 1$. Therefore, $X_2$ and $X_3$ have the same entropy.

\subsection{Case 2.} $ X_4 = (A,D) $ and $ X_6 = (A,F) $

For $ X_4 $ we have $ F(\eta) = 1 + 1/\eta $ and $ N(\eta) = \eta (\eta + 1) $, and it yields
$$
F^j(1) = \dfrac{\Fib(j+2)}{\Fib(j+1)}\ \ \ \text{and}\ \ \ N(F^j(1)) = \dfrac{\Fib(j+3)\Fib(j+2)}{\Fib(j+1)^2},\ \ \ j \geq 0.
$$
After some simplifications, we obtain, for $ n \geq 3 $,
$$
\|f^n(1)\| =
\Fib(3)^{2^{n-1}}
\Fib(4)^{2^{n-2}}
\Fib(5)^{2^{n-3}}
\dots
\Fib(n)^{2^2}
\Fib(n+1)^2
\Fib(n+2)
\Fib(n+3).
$$
Therefore,
$$
h_{PS}(X_4) = \lim\limits_{n\rightarrow\infty} \left( \displaystyle\sum\limits_{j = 2}^{n+1} \dfrac{1}{2^j} \log \Fib(j + 1) + \dfrac{1}{2^{n+1}} \log \Fib(n+3) \right) = \displaystyle\sum\limits_{n=2}^\infty\dfrac{1}{2^n}\log\Fib(n+1).
$$

We were not able to explicitly determine the number $ h_{PS}(X_4) $, however, it is possible to present some upper and lower bounds for $ h_{PS} $ taking advantage of the fact that we can easily calculate many terms of its series. For the lower bound, we consider
$$
h_{PS}(X_4) \geq \displaystyle\sum\limits_{n=2}^5 \dfrac{1}{2^n} \log \Fib(n+1) \approx 0.47619 \approx \log 1.6099.
$$
Moreover, since $ \sum_{n=1}^\infty \frac{n}{2^n} = 2 $ and $ \Fib(n) < 2^{n-3} $ for all $n \geq 7$, we get, for an upper bound,
$$
\begin{array}{rl}
    h_{PS}(X_4) = & \displaystyle\sum\limits_{n=2}^5 \dfrac{1}{2^n} \log \Fib(n+1) + \displaystyle\sum\limits_{n=6}^\infty \dfrac{1}{2^n} \log \Fib(n+1) \\
    \leq & \displaystyle\sum\limits_{n=2}^5 \dfrac{1}{2^n} \log \Fib(n+1) + \displaystyle\sum\limits_{n=6}^\infty \dfrac{1}{2^n} \log 2^{n-2}\\
    < & 0.47622 + \dfrac{1}{4} \log 2 \displaystyle\sum\limits_{n=4}^\infty \dfrac{n}{2^n} \leq 0.58452 \approx \log 1.79414.
\end{array}
$$
These bounds can naturally be improved by considering more terms for the lower bound, which was not our aim here. The estimates were taken considering the natural logarithm.

By Remark \ref{RemarkCoordinatesChaangeforEta_i}, $X_4$ and $X_6$ have the same entropy.

\subsection{Case 3.} $ X_5 = (A,E) $ and $ X_7 = (A,G)$

For $ X_5 $ we get $ \tilde{F}^j(1) = j+1 $ and $ \tilde{N}(\tilde{F}^j(1)) = j+2$, $ j \geq 1 $. Then,
$$
\|f^n(1)\| = 2^{2^{n-1}} 3^{2^{n-2}} \dots n^2 (n+1) (n+2),
$$
and, since $ \lim_{x\rightarrow\infty} \log(x+1)/2^x = 0 $,
$$
h_{PS}(X_5)
= \lim\limits_{n \rightarrow \infty} \left( \displaystyle\sum\limits_{j = 2}^{n+1} \dfrac{1}{2^j} \log j + \dfrac{1}{2^{n+1}} \log (n+2) \right)
= \displaystyle\sum\limits_{n=2}^\infty \dfrac{1}{2^n} \log n.
$$

For an approximation of this value, we proceed in a similar way as in the previous case. A particular choice of the authors of this study shows that
$$
h_{PS}(X_5) \geq \displaystyle\sum\limits_{n=2}^5 \dfrac{1}{2^n} \log n \approx 0.44755 \approx \log 1.56448
$$
and
$$
\begin{array}{rl}
    h_{PS}(X_5) = & \displaystyle\sum\limits_{n=2}^5 \dfrac{1}{2^n} \log n + \displaystyle\sum\limits_{n=6}^\infty \dfrac{1}{2^n} \log n \leq \displaystyle\sum\limits_{n=2}^5 \dfrac{1}{2^n} \log n + \displaystyle\sum\limits_{n=6}^\infty \dfrac{n}{2^n} \\
    \approx & 0.44755 + \frac{7}{64} = 0.55692 \approx \log 1.7453.
\end{array}
$$

Tree-shifts $ X_5 $ and $ X_7 $ have the same entropy by Remark \ref{RemarkCoordinatesChaangeforEta_i}.

\subsection{Case 4.} $ X_8 = (B,B) $, $ X_9 = (B,C) $ and $ X_{14} = (C,C) $

In any case we get $ F^j(1) = 1 $ and $ N(F^j(1)) = 1 $ for all $ j \geq 1 $. Therefore, the entropy $ h_{PS} $ of any of these systems is zero.

\subsection{Case 5.} $ X_{10} = (B,D) $ and $ X_{12} = (B,F) $

For $ X_{10} $ we have $ F^j(1) = j+1 $ and $ N(F^j(1)) = j+1 $. Then,
$$
\|f^n(1)\| = 2^{2^{n-2}} 3^{2^{n-3}} \dots n(n+2),
$$
and Case 3 gives $ h_{PS}(X_{10}) = \frac{1}{2} h_{PS}(X_{5})$. Remark \ref{RemarkCoordinatesChaangeforEta_i} guarantees that $ h_{PS}(X_{10}) = h_{PS}(X_{12}) $.

\subsection{Case 6.} $ X_{11} = (B,E) $ and $ X_{13} = (B,G) $

Consider the tree-shift $X_{11}$. We get $ \tilde{N}(\tilde{\eta}) = 1 $ and, for any $ j\geq1 $, $ \tilde{F}^j(1) = c_j $, where $ c_1 = 2 $ and $ c_j = c_{j-1} (c_{j-1} + 1) $. Then
$$
\|f^n(1)\| = c_n + 1
$$
and
$$
h_{PS}(X_{11}) = \lim\limits_{n\rightarrow\infty} \dfrac{1}{2^{n+1}} \log (c_n + 1) := \xi.
$$

We could not determine the numeric value of $ h_{PS}(X_{11}) $ from the limit above. However, we use the same approach as Petersen and Salama in \cite{PetersenSalama1}. Consulting the sequence $ (c_n)_{n\geq1} = (2,6,42,1806,\dots) $ on the Online Encyclopedia of Integer Sequences (OEIS) \cite{OEIS}, we obtain Sequence A007018, that states that $ c_n $ is the integer directly below the real number $ \theta^{2^n}-1/2 $, where $ \theta \approx 1.59791 $. We now prove that $ h_{PS}(X_{11}) = \frac{1}{2} \log\theta $, and this value is approximately $ 0.23435 $.

Indeed, since
$
c_n - (\theta^{2^n}-\frac{1}{2}) = \delta_n < 1$ for each $ n \geq 1 $, we have
$$
\log( c_n+1 ) = \log \left( \theta^{2^n} + \frac{1}{2} + \delta_n \right).
$$
Then,
$$
\lim\limits_{n\rightarrow\infty} \dfrac{1}{2^{n+1}} \left(\log (\theta^{2^n} + \frac{1}{2} + \delta_n) - \log \theta^{2^n} \right) = \lim\limits_{n\rightarrow\infty} \dfrac{1}{2^{n+1}} \left( \log \dfrac{\theta^{2^n} + \frac{1}{2} + \delta_n}{\theta^{2^n}} \right) = 0,
$$
which implies that
$$
h_{PS}(X_{11}) =
\lim\limits_{n\rightarrow\infty} \dfrac{1}{2^{n+1}} \log \left( \theta^{2^n} + \frac{1}{2} + \delta_n \right)
= \lim\limits_{n\rightarrow\infty} \dfrac{1}{2^{n+1}} \log \theta^{2^n}
= \dfrac{1}{2} \log \theta.
$$

We use Remark \ref{RemarkCoordinatesChaangeforEta_i} to guarantee that the entropy of $ X_{11} $ and $ X_{13} $ is the same.

\subsection{Case 7.} $ X_{15} = (C,D) $ and $ X_{17} = (C,F) $

For the tree-shift $ X_{15} $ we obtain $ N(\eta) = \eta^2 $ and $ F^j(1) = e_j/e_{j-1}^2 $, where $ e_0 = 1 $, $ e_1 = 2 $ and $ e_j = e_{j-2}^2 (e_{j-2}^2 + e_{j-1}) $. Then,
$$
\|f^n(1)\| = e_n + e_{n-1}^2
$$
and, as a consequence,
$$
h_{PS}(X_{15}) = \lim\limits_{n\rightarrow\infty} \dfrac{1}{2^{n+1}} \log (e_n + e_{n-1}^2) := \alpha.
$$
By the time of the consult, we could not find a reference on OEIS for either sequence, not for $ (e_n)_{n\geq0} = (1,2,3,28,333,875728,\dots)$ nor for $ (\|f^n(1)\|)_{n\geq1} = (3,7,37,1117,986617,\dots)$ in order to obtain a good approximation for the entropy of $X_{15}$. However, we can estimate an upper bound to $ h_{PS} $ better than $ \log 2 $, given by equation \eqref{eqUpperBoundasaminimum}, using the number of allowed blocks of length $ 2 $ as follows.

Considering the tree-shift $X_{15}$, each configuration on the last level of the allowed blocks of length $ 1 $ appears only once. It suggests that the configuration of the last level of a block of any length totally determines the tree-shift. However, more restrictions appear in regard to blocks of length $n \geq 2$, what can be seen by $\|f^2(1)\| = 7$, $\|f^3(1)\| = 37$ and so on. We consider the information of the existence of $37$ different configurations for the last level of a block of length $3$. For the reason that one can choose $2^{n-3}$ blocks of length $3$ for the last line of a block of length $n \geq 3$, we see that $ 37^{2^{n-3}} $ is an upper bound to the number of allowed blocks of length $ n \geq 3 $. Therefore,
$$
h_{PS}(X_{15}) \leq \lim\limits_{n\rightarrow\infty} \dfrac{1}{2^{n+1}} \log 37^{2^{n-3}} = \dfrac{1}{16} \log 37 \approx \log 1.25318.
$$
The reader can easily find a more refined upper bound for the entropy using $ \|f^m(1)\| $ for any $m > 2$ if needed.

Subsequently, in order to prove that this entropy is positive, we present a (nonoptimal) lower bound. We can see that $ \| f^n(1) \| > 2^{2^{n-1}} $ for $n = 1, \dots, 4$. Supposing that the same property holds for some $i \geq 4$, we find
$$
\|f^{i+1}(1)\| = e_{i+1} + e_i^2 > e_{i+1} = e_{i-1}^2 (e_{i-1}^2 + e_i) > 2^{2^{i-1}} e_{i-1}^2 > 2^{2^{i-1}} e_{i-1} > 2^{2^{i-1}}2^2 = 2^{2^i},
$$
so $ \| f^n(1)\| > 2^{2^{n-1}} $ for all $ n \geq 1 $. Consequently,
$$
h_{PS}(X_{15}) \geq \lim\limits_{n\rightarrow\infty} \dfrac{1}{2^{n+1}} \log 2^{2^{n-1}} = \dfrac{1}{4} \log 2.
$$
We then can conclude that $ \frac{1}{4} \log2 < \alpha \leq \frac{1}{16} \log 37 $, or yet, $ 0.17329 \leq \alpha \leq 0.22568 $.

Once more, Remark \ref{RemarkCoordinatesChaangeforEta_i} guarantees that the entropy of $ X_{15} $ and $ X_{17} $ coincide with each other.

\subsection{Case 8.} $ X_{16} = (C,E) $ and $ X_{18} = (C,G) $

Regarding to the tree-shift $ X_{16} $, for each $ j \geq 1 $ we get $ F^j(1) = \Fib(j+1) / \Fib(j+2) $ and $ N(F^j(1)) = \Fib(j+1)\Fib(j+3) / \Fib(j+2)^2 $. Therefore,
$$
\|f^n(1)\| =
\Fib(3)^{2^{n-3}}
\Fib(4)^{2^{n-4}}
\dots
\Fib(n)
\Fib(n+3).
$$
We then get
$$
h_{PS}(X_{16}) = \dfrac{1}{4} \displaystyle\sum\limits_{n=2}^\infty \dfrac{1}{2^n} \log \Fib(n+1) = \dfrac{1}{4} h_{PS}(X_4).
$$
By Case 2, we obtain $ 0.11903 \leq h_{PS}(X_{16}) \leq 0.14615$, or yet, $\log 1.1264 \leq h_{PS}(X_{16}) \leq \log 1.15737$, and, by Remark \ref{RemarkCoordinatesChaangeforEta_i}, $ X_{16} $ and $ X_{18} $ have the same entropy.

\subsection{Case 9.} $ X_{19} = (D,D) $ and $ X_{26} = (F,F) $

In regards to $X_{19}$, we obtain $ N(\eta) = \eta^2 $ and $ F^j(1) = r_j/r_{j-1}^2 $, where $ r_0 = 1, r_1 = 2^2, r_j = (r_{j-1} + r_{j-2}^2)^2$. Therefore,
$$
\|f^n(1)\| = r_n + r_{n-1}^2.
$$
Thus,
$$
h_{PS}(X_{19}) =
\lim\limits_{n\rightarrow\infty} \dfrac{1}{2^{n+2}} \log (r_n + r_{n-1}^2)^2 =
\lim\limits_{n\rightarrow\infty} \dfrac{1}{2^{n+1}} \log r_n.
$$

Using OEIS, Petersen and Salama \cite{PetersenSalama1} proved that $h_{PS}(X_{19})$ is approximately $0.509$. Using Remark \ref{RemarkCoordinatesChaangeforEta_i}, this results in the same entropy of $X_{26}$.

\subsection{Case 10.} $ X_{23} = (E,E) $ and $ X_{28} = (G,G) $

For $ X_{28} = (G,G) $ we obtain $F(\eta) = (\eta + 1)^2$ and $N(\eta) = 1$. Then, for all $n \in \N$,
$$
\| f^n(1)\| = s_n + 1,
$$
where $s_1 = 4$ and $s_n = (s_{n-1} + 1)^2$, $n > 1$. The sequence $(s_n)_{n \geq 1} = (4, 25, 676, 458329, \dots)$ corresponds to Sequence A004019 from OEIS. From this encyclopedia we understand that $s_n$ is the integer closer to $\ell^{2^n} - 1$, where $\ell \approx 2.258518$. According to arguments similar to the ones used in Case 6, we obtain
$$
h_{PS} (X_{28}) =
\lim\limits_{n \rightarrow \infty} \dfrac{1}{2^{n+1}} \log (s_n + 1) =
\lim\limits_{n \rightarrow \infty} \dfrac{1}{2^{n+1}} \log \ell^{2^n} \approx
\dfrac{1}{2} \log 2.25852
\approx 0.40735.
$$
Once more, Remark \ref{RemarkCoordinatesChaangeforEta_i} guarantees that the entropy of $X_{23}$ and $X_{28}$ are the same.

\subsection{Case 11.} $ X_{20} = (D,E) $, $ X_{21} = (D,F) $, $ X_{25} = (E,G) $ and $ X_{27} = (F,G) $

In any of these cases, we obtain $ N(F^j(1)) = 2 $ for all $ j \geq 0 $, thus,
$$
\|f^n(1)\| = 2^{2^{n-1}}
2^{2^{n-2}}
\dots
2 (1+1),
$$
and, for $ \ell \in \{20,21,25,27\} $,
$$
h_{PS}(X_\ell) = \log2 \displaystyle\sum\limits_{n=2}^\infty \dfrac{1}{2^n} = \dfrac{1}{2} \log2. $$

\subsection{Case 12.} $ X_{22} = (D,G) $ and $ X_{24} = (E,F) $

Considering $ X_{22} $, we get $ N(\eta) = \eta $ and $ F(\eta) = (1 + \eta) (1 + 1/\eta) $, thus $ F^j(1) = u_j/ \prod_{i=1}^{j-1}u_i $, where $ u_0 = 1 $, $ u_1 = 4 $ and $ u_j = \left (u_{j-1} + \prod_{i=1}^{j-2} u_i \right)^2 $ for all $ j \geq 2 $. We then obtain
$$
\|f^n(1)\| = u_n + \prod\limits_{i=1}^{n-1} u_i,
$$
so $ h_{PS}(X_{22}) = \lim\limits_{n\rightarrow\infty} \dfrac{1}{2^{n+1}} \log u_n $.

We could not find any reference in OEIS to the sequence $(u_n)_{n\geq0} = (1, 4, 25, 841, 885481, \dots)$, so we use a similar strategy as seen in Case 7. We have $ (\|f^n(1)\|)_{n \geq 1} = (5, 29, 941, 893891, \dots) $, and estimating the entropy using the number of allowed blocks of length $2$ provides a number greater than $\log 2$. However, by choosing to employ $\|f^3(1)\|$, we obtain
$$
h_{PS}(X_{22}) \leq \dfrac{1}{2^4} \log 941 \approx 0.427934 \approx \log 1.53409. 
$$
By Remark \ref{RemarkCoordinatesChaange} we know that the entropy of $X_{22}$ and $Y = (G,D)$ are the same, and, by Proposition \ref{EntropyMatrixBiggerEntries}, the entropy of $Y$ is at minimum the entropy of $X_{10} = (B,D)$. Using Case 5, we then obtain
$$
h_{PS}(X_{22}) \geq h_{PS}(X_{10}) = \frac{1}{2} h_{PS}(X_5) \geq 0.22376,
$$
which is a more desirable lower bound then the one obtained by Proposition \ref{PropositionAperiodicMatrixImpliesPositiveEntropy}.

Using Remark \ref{RemarkCoordinatesChaangeforEta_i} we can conclude that the entropy of $X_{22}$ and $X_{24}$ indeed coincide.

\section{Existence and non-existence of invariant measures}\label{SectionInvariantMeasures}

This section is dedicated to the study of a specific definition of measure for tree-shifts of the form $X = (A_1, \dots, A_k)$. For simplicity, we consider $k = 2$, since the general case can be easily deduced from it. We begin by introducing some notation. 

Given a transition matrix $M$, let $s(M, i)$ denote the sum of the elements in row $i$ of $M$, and $\overline{M}$ denote the matrix obtained from $M$ by normalizing each row by its sum, that is, for each $i, j$ we have
$$
\overline{M}(i,j) = \dfrac{M(i,j)}{s(M,i)} =: p_M(i,j).
$$
The definition yields, for each $i \in \mathcal{A}$,
\begin{equation}\label{eqNormalizedMatrix}
   \displaystyle\sum\limits_{j=0}^{d-1} p_M(i,j) = 1.
\end{equation}

Let $X = (A_1, A_2)$ be a tree-shift over $\mathcal{A} = \{ 0, \dots, d-1 \}$. An $\sigma_i$-\textit{invariant measure}, $i \in \{a_1, a_2\}$, is a probability $\nu$ in $X$ such that $\nu (C) = \nu ( \sigma_i^{-1} (C))$ for any measurable set $C \subset X$. An \textit{invariant measure} for $X$ is, thus, a probability that is invariant for both $\sigma_{a_1}$ and $\sigma_{a_2}$. 

We say that $v$ is a \textit{probability vector} for $X$ if $v = (v(0), \dots, v(d-1))$, with $v(i) \geq 0$ and $\sum_i v(i) = 1$. Given a vector as such, for each block $C$ in $X$ with nodes $c_x$, $0 \leq |x| \leq n$, define 
\begin{equation}\label{eqDefinitionMeasure}
    \mu([C]) = v (c_\epsilon) \prod_{0 \leq |y| \leq n-1} p_{A_1}(c_y, c_{ya_1}) p_{A_2}(c_y, c_{ya_2}).
\end{equation}
The previous definition is inspired by the context of Markov chains. Below we investigate whether $\mu$ is an invariant measure.

\begin{lemma}\label{LemmaProbabilityVectorDefinesSigma_aInvariantMeasure}
Let $X = (A_1, A_2)$ be a tree-shift and $v$ be a probability vector for $X$. If $v \overline{A}_1 = v$, then the measure $\mu$ defined in equation \eqref{eqDefinitionMeasure} is $\sigma_{a_1}$-invariant.
\end{lemma}

\begin{proof}
Denote by $\sigma_{a_1}^{-1}[C]$ the set of trees $t$ in such a way that $\sigma_{a_1}(t)$ belongs to the cylinder $[C]$. More precisely, if $C$ is a block with length $n$, then
$$
\sigma_{a_1}^{-1}[C] = \left\{ [\Delta] \ ; 
\begin{array}{c}
    \Delta \text{ is a block with length } n+1 \text{ and entries } \Delta_y, \ 0\leq |y| \leq n+1,   \\
     \text{ satisfying } \Delta_{a_1x} = c_x \text{ for all } 0 \leq |x| \leq n 
\end{array}
  \right\}.
$$
Each cylinder in $\sigma_{a_1}^{-1}[C]$ has $2^{n+2}-1$ nodes, $2^{n+1}-1$ of which are fixed by $C$. Thus, $\sigma_{a_1}^{-1}[C]$ is the union of $d^{2^{n+1}}$ cylinders $[\Delta^1], \dots, [\Delta^{d^{2^{n+1}}}]$. In this case, if any $\Delta^j$ is not an admissible block in $X$, then $\mu([\Delta^j]) = 0$, thus, we indeed can consider all such blocks. The equation $v \overline{A}_q = v$ means
\begin{equation}\label{eqProbabilityVectorForM}
   v(i) = \displaystyle\sum\limits_{j=0}^{d-1} v(j) p_{A_1}(j,i).
\end{equation}

We begin by considering that $C$ has length $0$, that is, $[C] = [j]$ for some $j \in \mathcal{A}$. Thus, $\sigma_{a_1}^{-1}[C]$ is the union of cylinders determined by blocks of length $1$ in such a way that node ${a_1}$ is labeled by $j$. Therefore,
$$
\begin{array}{rl}
    \mu(\sigma_{a_1}^{-1}[C]) = & \displaystyle\sum\limits_{r,s=0}^{d-1} v(r) p_{A_1}(r,j) p_{A_2}(r,s) = \displaystyle\sum\limits_{r=0}^{d-1} v(r) p_{A_1}(r,j) \left( \displaystyle\sum_{s=0}^{d-1} p_{A_2}(r,s) \right) \\
    = & \displaystyle\sum\limits_{r=0}^{d-1} v(r) p_{A_1}(r,j) = v(j) = \mu([C]).
\end{array}
$$
Above we used the properties described in equations \eqref{eqNormalizedMatrix} and \eqref{eqProbabilityVectorForM}.

Assume that, for any cylinder $[C]$ of a fixed length $n \geq 0$, it holds $\mu([C]) = \mu(\sigma_{a_1}^{-1}[C])$. This is equivalent to 
\begin{equation}\label{eqInductionHypothesisInvariantMeasure}
    \displaystyle\sum\limits_{j=1}^{d^{2^{n+1}}} \left( v(\Delta_\epsilon^j) p_{A_1}(\Delta_\epsilon^j, c_\epsilon) p_{A_2}(\Delta_\epsilon^j, \Delta_{a_2}^j) \prod_{0 \leq |y| \leq n-1} p_{A_1}(\Delta_{a_2y}^j, \Delta_{a_2ya_1}^j) p_{A_2}(\Delta_{a_2y}^j, \Delta_{a_2ya_2}^j) \right) = v(c_\epsilon),
\end{equation}
where $\Delta^1, \dots, \Delta^{d^{2^{n+1}}}$ are blocks with entries $\Delta_x^j$, $0 \leq |x| \leq n+1$ that determine the cylinders in $\mu(\sigma_{a_1}^{-1}[C])$. Above we proved that equation \eqref{eqInductionHypothesisInvariantMeasure} holds for $n = 0$.

Let $\tilde{C}$ be a block of length $n+1$, with nodes $\tilde{c}_x$ for each $0 \leq |x| \leq n+1$. From previous arguments we have that $\sigma_{a_1}^{-1}[\tilde{C}]$ is the union of $d^{2^{n+2}}$ distinct cylinders, say $[\tilde{\Delta}^1], \dots, [\tilde{\Delta}^{d^{2^{n+2}}}]$. At every level of each of each of these blocks, except from the root, the label at half of the nodes are completely determined by $\tilde{C}$. 

In order to use the induction hypothesis, we group the cylinders of $\sigma_{a_1}^{-1}[\tilde{C}]$ into $d^{2^{n+1}}$ sets: two cylinders in the same set coincide at all nodes, except at level $n+2$. Choose arbitrarily one of these sets and denote its elements as $F^1, \dots, F^{d^{2^{n+1}}}$. Let $f_x^\ell$ be the label of $F^\ell$ at node $x$. By definition, $f_x^1 = \dots = f_x^{d^{2^{n+1}}}$ for all $0 \leq |x| \leq n+1$. Additionally, consider the numbers
$$
r = p_{A_1}(f_\epsilon^1, \tilde{c}_\epsilon) \displaystyle\prod_{0 \leq |x| \leq n} p_{A_1}(\tilde{c}_{x}, \tilde{c}_{xa_1}) \ p_{A_2}(\tilde{c}_{x}, \tilde{c}_{xa_2}),
$$
and
$$
s = p_{A_2}(f_\epsilon^1, f_{a_2}^1)  \displaystyle\prod_{0 \leq |x|\leq n-1} p_{A_1}(f_{a_2x}^1, f_{a_2xa_1}^1) \ p_{A_2}(f_{a_2x}^1, f_{a_2xa_2}^1).
$$

We thus have
$$
\begin{array}{rl}
    \mu \left( \displaystyle\bigcup_{\ell=1}^{d^{2^{n+1}}} [F^\ell] \right) = & \displaystyle\sum\limits_{\ell=1}^{d^{2^{n+1}}} \mu([F^\ell]) \\
    = & \displaystyle\sum\limits_{\ell=1}^{d^{2^{n+1}}} \left(v(f_\epsilon^\ell) \prod_{0\leq |y| \leq n+1} p_{A_1}(f_y^\ell, f_{ya_1}^\ell) \ p_{A_2}(f_y^\ell, f_{ya_2}^\ell)\right) \\
    = & v(f_\epsilon^1)\ r\ s \ \displaystyle\sum_{i,j=0}^{d-1} 
    \left(\displaystyle\prod_{|y|=n} p_{A_1}(f_{a_2y}^1, i) \ p_{A_2}(f_{a_2y}^1, j)\right) \\
    = & v(f_\epsilon^1)\ r\ s \ \displaystyle\prod_{|x|=n} \left( \sum_{i=0}^{d-1} p_{A_1}(f_{a_2x}^1, i) \right) \left( \sum_{j=0}^{d-1} p_{A_2}(f_{a_2x}^1, j)\right) \\
    = & v(f_\epsilon^1) \ r\ s.
\end{array}
$$
The previous arguments prove that the measure of the set $[F^1] \cup \dots \cup [F^{d^{2^{n+1}}}]$ is totally determined by the configuration in common to all blocks $F_1, \dots, F^{d^{2^{n+1}}}$, as expected.

The previously pointed out fact that there exist $d^{2^{n+1}}$ distinct configurations for the first $n+1$ nodes for cylinders in $\sigma_{a_1}^{-1}(\tilde{C})$, say $\{\delta^1_x\}_{0\leq |x|\leq n+1}, \dots, \{\delta^{d^{2^{n+1}}}_x\}_{0\leq |x|\leq n+1}$, the previous computation, and the use of the induction hypothesis in the penultimate equality, lead us to conclude that
$$
\begin{array}{rl}
    \mu(\sigma_{a_1}^{-1}[\tilde{C}]) = & \displaystyle\prod_{0\leq |x| \leq n} p_{A_1}(\tilde{c}_x, \tilde{c}_{xa_1}) \ p_{A_2}(\tilde{c}_x, \tilde{c}_{xa_2}) \\
    & \hspace{-0.1cm} \left[ \displaystyle\sum_{j=1}^{d^{2^{n+2}}} v(\tilde{\Delta}_\epsilon^j)p_{A_1}(\tilde{\Delta}_\epsilon^j, \tilde{c}_\epsilon^j) p_{A_2}(\tilde{\Delta}_\epsilon^j, \tilde{\Delta}_{a_2}^j) \prod_{|y| \leq n} p_{A_1}(\tilde{\Delta}_{a_2y}^j, \tilde{\Delta}_{a_2ya_1}^j) p_{A_2}(\tilde{\Delta}_{a_2y}^j, \tilde{\Delta}_{a_2ya_2}^j) \right] \\
    = & \displaystyle\prod_{0\leq |x| \leq n} p_{A_1}(\tilde{c}_x, \tilde{c}_{xa_1}) \ p_{A_2}(\tilde{c}_x, \tilde{c}_{xa_2}) \\
    &  \hspace{-0.1cm}\left[ \displaystyle\sum_{j=1}^{d^{2^{n+1}}} v(\delta_\epsilon^j)p_{A_1}(\delta_\epsilon^j, \tilde{c}_\epsilon) p_{A_2}(\delta_\epsilon^j, \delta_{a_2}^j) \prod_{0 \leq |y| \leq n-1} p_{A_1}(\delta_{a_2y}^j, \delta_{a_2ya_1}^j) p_{A_2}(\delta_{a_2y}^j, \delta_{a_2ya_2}^j) \right] \\
    = & v(c_\epsilon) \displaystyle\prod_{0\leq |x| \leq n} p_{A_1}(\tilde{c}_x, \tilde{c}_{xa_1}) \ p_{A_2}(\tilde{c}_x, \tilde{c}_{xa_2}) \\
    = & \mu([\tilde{C}]).
\end{array}
$$
It proves the result.
\end{proof}

\begin{proposition}
    Let $X = (A_1, A_2)$ be a tree-shift and $v$ be a probability vector for $X$. Then the measure $\mu$ as defined in equation \eqref{eqDefinitionMeasure} is $\sigma_{a_1}$-invariant if, and only if, $v \overline{A}_1 = v$.
\end{proposition}

\begin{proof}
Lemma \ref{LemmaProbabilityVectorDefinesSigma_aInvariantMeasure} proves one of the implications. Assume that $\mu$ as in equation \eqref{eqDefinitionMeasure} is $\sigma_{a_1}$-invariant. For each $i \in \mathcal{A}$ we have
$$
v(i) = \mu([i]) = \displaystyle\sum_{j,\ell=0}^{d-1} v(j) p_{A_1}(j,i) p_{A_2}(j,\ell) = \displaystyle\sum\limits_{j=0}^{d-1} v(j) p_{A_1}(j,i) \left( \displaystyle\sum\limits_{\ell=0}^{d-1} p_{A_2}(j,k) \right) = \displaystyle\sum\limits_{j=0}^{d-1} v(j) p_{A_1}(j,i).
$$
It proves that $v \overline{A}_1 = v$, as desired.
\end{proof}

The previous arguments can be easily adapted to prove correspondent results considering the matrix $A_2$ instead of $A_1$. Therefore, the following result is immediate.

\begin{proposition}
    Let $X = (A_1, A_2)$ be a tree-shift over $\mathcal{A}$ and $v$ be a probability vector for $\mathcal{A}$. Then $\mu$ as defined in equation \eqref{eqDefinitionMeasure} is invariant if, and only if, $v \overline{A}_1 = v \overline{A}_2 = v$.
\end{proposition}

Below we present a tree-shift that admits invariant measures for both $\sigma_{a_1}$ and $\sigma_{a_2}$, but not for the two dynamics at the same time.

\begin{example}
Consider $X = (A_1, A_2)$, with
$$
A_1 = \begin{pmatrix}
    1 & 1 \\ 1 & 1
\end{pmatrix}
\ \ \ \ \ \ \text{ and } \ \ \ \ \ \ 
A_2 = \begin{pmatrix}
    1 & 1 \\ 1 & 0
\end{pmatrix}.
$$
By definition, we have
$$
\overline{A}_1 = \begin{pmatrix}
    \frac{1}{2} & \frac{1}{2} \\ \frac{1}{2} & \frac{1}{2}
\end{pmatrix}
\ \ \ \ \ \ \text{ and } \ \ \ \ \ \ 
\overline{A}_2 = \begin{pmatrix}
    \frac{1}{2} & \frac{1}{2} \\ 1 & 0
\end{pmatrix},
$$
Since $v_1 = (\frac{1}{2}, \frac{1}{2})$ and $v_2 = (\frac{2}{3}, \frac{1}{3})$ are the unique probability vectors satisfying $v_1 \overline{A}_1 = v_1$ and $v_2 \overline{A}_2 = v_2$, we get that there does not exist an invariant measure $\mu$ for $X$ as in equation \eqref{eqDefinitionMeasure}.
\end{example}

Whenever $A_1 = A_2$, then $X = (A_1, A_2)$ indeed admits an invariant measure.

\section{Topological properties of tree shifts}\label{sectiontopologicalproperties}

In this section, we aim to categorize several tree-shifts according to the definitions of irreducibility, mixing, and chaos in the sense of Devaney exposed in \cite{BanChangChaos}. We present these properties for $k$-trees of a tree-shift $X$ over the alphabet $ \mathcal{A} = \{0, \dots, d-1\} $. Given $x = x_1 \dots x_n \in \Sigma^*$, we denote $A_x = A_{x_1} \dots A_{x_n}$ the product of the matrices, and $\sigma_x = \sigma_{x_n} \circ \dots \circ \sigma_{x_1}$ the composition of shift maps. 

A subset of words $P \subset \Sigma^*$ is called a \textit{prefix set}, so long as any $x, y \in P$ and $w \in \Sigma^*$ do not satisfy $x = yw$. The length of the longest word in $P$, $|P|$, is the length of this set. If every word $x \in \Sigma^*$ with length exceeding $|P|$ has a prefix in $P$, we call it a \textit{complete prefix set} (CPS). We deem that $X$ is \textit{irreducible} if, for each pair of allowed blocks $u$ and $v$ with length $n$, there are $t \in X$ and a complete prefix set $P$ whose words have length at least $n$, in such a way that $u$ is the block of $t$ rooted at $\epsilon$ and, for each $x \in P$, $v$ is the block of $t$ with root at $x$.

The tree-shift $X$ is \textit{mixing} if there exist $ P_{a_1}, \dots, P_{a_k}$ complete prefix sets with the property that, given $u$ and $v$ allowed blocks in $X$ with $|u| = n > 0$, there exists $t \in X$ such that $u$ is the block of $t$ rooted at $\epsilon$ and $v$ is the block of $t$ rooted at $ wx $, for all $ x \in P_{w_n} $, where $ w = w_1 \dots w_n \in \Sigma^n $. By definition, mixing implies irreducibility.

Subsequently we define chaos in the sense of Devaney using the notation of tree-shifts. A \textit{periodic point} of $X$ is a tree $t \in X$ such that $\sigma_x(t) = t $ for all $x$ in some CPS $P$. Moreover, $X$ is \textit{topologically transitive} if, for any open sets $U, V \subset X$, there exists $x \in \Sigma^*$ such that $\sigma_x(U) \cap V \neq \emptyset$. We also say that $X$ has \textit{sensitive dependence on initial conditions} if there exists $\delta > 0$ such that, given $t \in X$, and $V$ which is a neighborhood of $t$, we can find $t^\prime \in V$, and $x \in \Sigma^*$ such that $\dd(\sigma_x(t), \sigma_x(t^\prime)) > \delta$.

Finally, we define $X$ as \textit{chaotic} (in the sense of Devaney) if these three following properties are satisfied:
\begin{enumerate}
    \item[a.] $X$ tree-shift is topologically transitive;
    \item[b.] $X$ has sensitive dependence on initial conditions;
    \item[c.] the set of periodic points of $X$ is dense.
\end{enumerate}
%\textcolor{blue}{
In \cite{BanChangChaos} attention is drawn to the fact that every tree-shift is expanding, so the condition of sensitive dependence on initial conditions would be redundant in the definition of chaos in the sense of Devaney. However, we claim that this statement is inaccurate in the general case. The elaboration of this argument is as follows.%}

%\textcolor{blue}{
A tree-shift $X$ is said to be expanding if, given $t, t^\prime \in X$ in a small neighborhood $U$,
$$
\min\{ \dd(\sigma_a(t), \sigma_a(t^\prime)), \dd(\sigma_b(t), \sigma_b(t^\prime)) \} > \dd(t,t^\prime).
$$
Consider the tree-shift $X_{14}$, defined in terms of the following allowed blocks of length $1$:
\begin{center}
     \begin{forest}
 [$0$[$1$][$1$]]
 \end{forest}
 \ \ \ \ \ \ \ \ 
  \begin{forest}
 [$1$[$0$][$0$]]
 \end{forest}
\end{center}
There are only two elements in this set: the tree $t_0$ with $0$ at its root and at every node $x$ with length $2n$, $n \geq 1$, and $1$ at all nodes with length $2n + 1$, and $t_1$, constructed in a similar way but with $1$ at its root and also at every node with even length. %}
%\textcolor{blue}{
Notice that $\dd(t_0, t_1) = 1$ and this is the greatest possible distance between two elements of any tree-shift, by construction. Moreover, $\dd(\sigma_x(t_0), \sigma_x(t_1)) = \dd(t_1, t_0) = 1$, this can be done using any $\sigma_x$, $x \in \Sigma^*$. Ergo, $X_{14}$ is not expanding, as well as many other examples of tree-shifts consisted only by periodic points. %In some cases, this argument also works for tree-shifts if all of their elements are periodic points.} \textcolor{red}{Isso que eu escrevi está certo? Ou eu estou interpretando e usando errado a definição de expansividade?}

In \cite{BanChangChaos}, Ban and Chang proved a number of conditions to determine whether a tree-shift is irreducible or mixing, among which three will be needed to provide answers to questions evoked in their text. The first is Corollary \ref{Cor3.10BanChang} (that appears in \cite{BanChangChaos} as Corollary 3.11). We added the hypothesis that $X$ has no isolated points, an important condition that the authors did not consider at the time and we discuss in the following paragraph.

\begin{corollary}\label{Cor3.10BanChang}
Suppose $X$ is a tree-shift with no isolated points.
\begin{enumerate}
    \item[a.] If $X$ is an irreducible tree-shift of finite type, then $X$ is chaotic;
    \item[b.] If $X$ is mixing, then $X$ is chaotic.
\end{enumerate}
\end{corollary}

The detail that needs to be taken heed of in the previous result is that the existence of a dense orbit (as assumed for the proof) implies topological transitivity if the dynamical system has no isolated points. In the case that it has, for example, only periodic points, we can not ensure that it is expansive, thus, the sensitive dependence on initial conditions may not occur. For this reason, the irreducibility of a tree-shift of finite type does not implies immediately its chaoticity. For a specific example, consider $X_{14}$. We previously proved that this is a non-expanding tree-shift, and, from Proposition \ref{PropIrreducibleAndMixindTreeShifts}, exposed below, we get that $X_{14}$ is irreducible. The added hypothesis is, therefore, indeed necessary. %for item a. %Since mixing tree-shift of finite type need to have infinitely many points by definition, 

We also present, in Theorem \ref{Theo4.3BanChang} and  Corollary \ref{Cor4.4BanChang}, the two remaining results from \cite{BanChangChaos} needed to our purposes, originally Theorem 4.3 and the first two items of Corollary 4.4, respectively. Although both results are presented by Ban and Chang in terms of a binary tree-shift determined by two transition matrices, we rewrite them considering $k$-trees and the notation of this text. The proofs for $k$-trees are similar to the ones provided in \cite{BanChangChaos}, as pointed out by the authors in their work. % Once more, we need to disregard the case where the tree-shifts have only periodic points, since they can be false in these particular cases.

\begin{theorem}\label{Theo4.3BanChang}
Suppose $X = (A_1, \dots, A_k)$ is a tree-shift. % not consisting only of periodic points.
\begin{enumerate}
    \item[a.] If $X$ is irreducible, then $ A_1, \dots, A_k $ are irreducible;
    \item[b.] $X$ is irreducible if, and only, if for each pair $i,j \in \mathcal{A} $ there exists a CPS $P$ such that $ A_x(i,j) > 0 $ for all $x \in P$;
    \item[c.] $X$ is mixing if, and only if, there exists a CPS $P$ such that $ A_x(i,j) > 0 $ for all $x \in P$ and $i,j \in \mathcal{A}$.
\end{enumerate}
\end{theorem}

\begin{corollary}\label{Cor4.4BanChang}
Consider $ X = (A_1,\dots,A_k) $ a tree-shift consisting of $k$-trees. %, not all of these being periodic points.
\begin{enumerate}
    \item[a.] If $A_1 = \dots = A_k = A$, then $X$ is irreducible if, and only if, $A$ is irreducible;
    \item[b.] If $A_1 = \dots = A_k = A$, then $X$ is mixing if, and only if, $A$ is aperiodic.
\end{enumerate}
\end{corollary}

For further reference, we highlight some properties of the matrices $A-G$ in what follows.

\begin{remark}\label{RemarkMatricesIrreducibleAperiodic}
The matrices $B$, $E$, and $G$ are not irreducible, $C$ is irreducible but not aperiodic and $A$, $D$, and $F$ are aperiodic.
\end{remark}

With these results, we are now able to categorize all the binary tree-shifts over the alphabet $\mathcal{A} = \{0,1\}$ presented in Section \ref{sectionentropytable} in terms of the topological properties previously discussed.

\begin{proposition}\label{PropNotIrreducibleTreeShiftsExamples}
If $X$ is defined by at least one of the matrices $B$, $E$, or $G$, then $X$ is not irreducible.
\end{proposition}

\begin{proof}
The proof is immediate by the counterpositive of Theorem \ref{Theo4.3BanChang}, since the presented matrices are not irreducible.
\end{proof}

Examples 4.5 and 4.12 of \cite{BanChangChaos} prove that $ X_4 $ is mixing and $ X_{21}$ is irreducible. In addition to this, $X_8$ is not irreducible. The following result analyzes the remaining cases.

\begin{proposition}\label{PropIrreducibleAndMixindTreeShifts}
Consider the matrices $A$ - $G$ as defined above.
\begin{enumerate}
    \item[i.] The tree-shifts $X_1, X_6, X_{19}$ and $X_{26}$ are mixing;
    \item[ii.] The tree-shifts $X_3$, $X_{14}$, $X_{15}$, $X_{17}$ and $X_{21}$ are irreducible but not mixing.
\end{enumerate}
\end{proposition}

\begin{proof}
By Remark \ref{RemarkMatricesIrreducibleAperiodic}, $A$, $D$, and $F$ are aperiodic matrices, thus the tree-shifts $X_1, X_{19}$, and $X_{26}$ are mixing according to Corollary \ref{Cor4.4BanChang}. Moreover, considering the CPS $P = \{a, ba, bb\}$, $X_6$ is mixing by Theorem \ref{Theo4.3BanChang}, since
$$ A =
\begin{pmatrix}
1 & 1 \\
1 & 1
\end{pmatrix}
\ \ \ \
FA =
\begin{pmatrix}
1 & 1 \\
2 & 2
\end{pmatrix}
\ \ \ \
FF =
\begin{pmatrix}
1 & 1 \\
1 & 2
\end{pmatrix}.
$$

Let us prove by definition that $X_{14}$ is irreducible. Fix $n \in \N$ and $u$ and $v$ admissible blocks of length $n$. Notice that all nodes at the last level of $u$ have the same label. If this label coincide with the root of $v$, take the CPS $P = \{ x \in \Sigma^* \ ; \ |x| = n \}$ and, if not, consider $P = \{ x \in \Sigma^* \ ; \ |x| = n + 1 \}$. In the first case, we can construct an admissible block with $u$ attached on the root and having $v$ attached to each node at the last level of $u$, and, in the second case, we extend $u$ to a block of length $n+1$ (that coincide with $u$ in the first $n$ levels) and attach $v$ at each node at level $n+1$ of this new block, which is also an admissible block in $X_{14}$. By definition, $X_{14}$ is irreducible.

The previous analysis shows that $X_{14}$ can not be mixing. In fact, the transitions $0 \mapsto 1$ and $1 \mapsto 0$ are possible only after an odd number of steps (or, in other terms, the admissible words in paths at the trees starting with $0$ and ending with $1$ have even length) and the transitions $0 \mapsto 0$ and $1 \mapsto 1$ only occur after an even number of steps (only words $x_1 \dots x_n$ with $n$ odd can have $x_1 = x_n$ to be admissible). Therefore, it is not possible to define two CPS $P_0$ and $P_1$ as in the definition of a mixing tree-shift.

To prove that the remaining tree-shifts presented in item ii. are irreducible we use Theorem \ref{Theo4.3BanChang}. For $X_3$ we can consider the complete prefix sets $P = \{aa, ab, ba, bb\}$ and $\tilde{P} = \{a, b\}$. Taking $i, j = 1, 2$, for all $x \in P$ we get $ A_x (i,i) > 0 $ and, for all $\tilde{x} \in \tilde{P} $ we obtain $A_{\tilde{x}}(i,j) > 0$, $i\neq j$. It implies that $X_3$ is irreducible. The same $P$ and $\tilde{P}$ work for the other cases.

It remains to be proven that tree-shifts $X_3$, $X_{15}$, $X_{17}$, and $X_{21}$ are not mixing. We consider $ X_{21}$ and assert how a similar idea can be applied to the remaining cases. First, notice that, if there exists a CPS $P$ satisfying Theorem \ref{Theo4.3BanChang}, then $a, b \notin P$, since both matrices $D$ and $F$ have a zero entry. We claim that neither element of the form $x = (ab)^j$ or $y = (ab)^j a$ can be in $P$, $j \geq 1$. Indeed, for $x = ab$ and $y = aba$,
$$
A_x = DF =
\begin{pmatrix}
1 & 2 \\
0 & 1
\end{pmatrix}
\ \ \ \ \text{and} \ \ \ \
A_y = DFD =
\begin{pmatrix}
3 & 1 \\
1 & 0
\end{pmatrix}.
$$
Fix $ x = (ab)^j $ with $ j \geq 1 $ arbitrarily. If $ A_x = \begin{pmatrix}  1 & c \\ 0 & 1  \end{pmatrix} $, then
$$ 
A_{(ab)^ja} = A_x D =
\begin{pmatrix}
c+1 & 1 \\
1 & 0
\end{pmatrix}
\ \ \ \ \text{and} \ \ \ \
A_{(ab)^{j+1}} = A_x D F =
\begin{pmatrix}
1 & c+2 \\
0 & 1
\end{pmatrix}.
$$
We then conclude that there is no complete prefix set $P$ for $ X_{21} $ such that $ A_x(i,j) > 0$ for all $i, j = 1, 2$ and all $x \in P$.

For the remaining tree-shifts it is sufficient to notice that $ C^{2n} = B $ and $ C^{2n+1} = C $ for all $n \geq 1$, therefore, there is no $ N \in \N $ such that $ C^N(i,j) > 0 $ for all $i,j$.
\end{proof}

It remains to be determined which tree-shifts are chaotic in the sense of Devaney. By Corollary \ref{Cor3.10BanChang} and Proposition \ref{PropIrreducibleAndMixindTreeShifts} it is immediately observed that $X_1$, $X_3$, $ X_4 $, $ X_6 $, $X_{15}$, $X_{17}$, $ X_{19} $, $X_{21}$, and $X_{26}$ are chaotic. Except for $X_8$, the contrapositive of Corollary \ref{Cor3.10BanChang} guarantees that the tree-shifts that are defined by at least one matrix being $B$, $E$, or $G$ are not mixing. We are also cognizant of $X_{14}$ is not chaotic. Hence, all the tree-shifts considered in this section are covered.

\begin{remark}\label{RemarkTreeShiftsWithoutDensePeriodicPoints}
Tree-shifts defined by at least one matrix with zeros in all entries of a row except from the entry on the diagonal does not have dense periodic points, therefore, are not chaotic.
\end{remark}

Below, we summarize the properties that have been proved in this section, where each $r$ in the first and fifth rows corresponds to the tree-shift $X_r$. The symbol $\checkmark$ means that the property holds, whereas x represents that such property is not verified.

\begin{table}[htp]
\begin{center}
\caption{Topological properties for tree-shifts $X_1$ to $X_{28}$.}
\begin{tabular}{|c|c|c|c|c|c|c|c|c|c|c|c|c|c|c|}
\hline
\multirow{1}{.5em}
& 1 & 2 & 3 & 4 & 5 & 6 & 7 & 8 & 9 & 10 & 11 & 12 & 13 & 14 \\
\hline
irreducible & \checkmark & x & \checkmark & \checkmark & x & \checkmark & x & x & x & x & x & x & x & \checkmark \\
\hline
mixing & \checkmark & x & x & \checkmark & x & \checkmark & x & x & x & x & x & x & x & x  \\
\hline
chaotic & \checkmark & x & \checkmark & \checkmark & x & \checkmark & x & x & x & x & x & x & x & x \\
\hline
\hline
& 15 & 16 & 17 & 18 & 19 & 20 & 21 & 22 & 23 & 24 & 25 & 26 & 27 & 28 \\
\hline
irreducible & \checkmark & x & \checkmark & x & \checkmark & x & \checkmark & x & x & x & x & \checkmark & x & x \\
\hline
mixing & x & x & x & x & \checkmark & x & x & x & x & x & x & \checkmark & x & x \\
\hline
chaotic & \checkmark & x & \checkmark & x & \checkmark & x & \checkmark & x & x & x & x & \checkmark & x & x \\
\hline
\end{tabular}
\end{center}
\end{table}

As previously mentioned, the main importance of these examples is not only to understand the definitions presented in a considerable number of tree-shifts, but also to complement the work of Ban and Chang \cite{BanChangChaos}, from which this section was inspired.

To conclude, we introduce two open problems proposed by Ban and Chang in \cite{BanChangChaos}, relating to entropy:

\textit{Problem 3.} Suppose $ X $ is a tree-shift. Does $ h_{BC}(X) > 0 $ imply the chaos of $ X $?

\textit{Problem 4.} Suppose $ X $ is an irreducible tree-shift of finite type. Is $ h_{BC}(X) > 0 $? Does $ X $ being a mixing tree-shift imply $ h_{BC}(X) > 0 $?

\noindent (The authors also addressed Problems 1 and 2, however, they are outside of the scope of this text.)

Let us first give two examples, with different values of $h_{PS}$, that answer Problem 3 negatively. By Section \ref{sectiontopologicalproperties} and Case 1, $ X_2 $ is an example of non-chaotic tree-shift (in the sense of Devaney) with positive entropy, both $ h_{PS} $ and $ h_{BC} $. Moreover, in Example \ref{ExampleZerohPS-PositivehBC}, we have a tree-shift with zero $h_{PS}$ and positive $h_{BC}$ that is non chaotic, according to Remark \ref{RemarkTreeShiftsWithoutDensePeriodicPoints} (neither mixing or irreducible). 

We consider both questions of Problem 4 separately. The tree-shift $X_{14}$ is irreducible but has zero entropy, so the answer of the first question in Problem 4 is negative. However, we can narrow the scope of the question exclusively to tree-shifts with infinitely many non-isolated trees, and show that, in this case, every irreducible tree-shift of finite type has positive entropy $h_{BC}$ and $h_{PS}$. In this case, since any Markov tree-shift is conjugated to a Markov tree-shift given by transition matrices, let us address the second case. Theorem \ref{Theo4.3BanChang} implies that all matrices that define the Markov tree-shift $X$ need to be irreducible, thus, Proposition \ref{PropositionIrreducibleMatrixNorm2ImpliesPositiveEntropy} guarantees that its entropy is strictly positive. According to Ban and Chang \cite{BanChangChaos}, irreducibility is preserved by conjugation, so we conclude that all irreducible tree-shifts of finite type with infinitely many non-isolated elements have positive $h_{BC}$ and $h_{PS}$. Since a mixing tree-shift of finite type needs to have infinitely many non-isolated points by definition, the answer for the second question of Problem 4 is, therefore, observable. 

Let us discuss another property of the aforementioned dynamical systems. If the tree-shift $X = (A_1, \dots, A_k)$ is mixing, then, by Theorem \ref{Theo4.3BanChang} there exists $p_1 \in \N$ such that $A_1^{p_1}(i,j) > 0$ for all $i, j\in \mathcal{A}$, this implies that $A_1$ is aperiodic. The same argument demonstrates that the remaining matrices have the same property, that is, if $X = (A_1, \dots, A_k)$ is mixing, then $A_1, \dots, A_k$ are aperiodic. Compare with Theorem \ref{Theo4.3BanChang} a. However, the reciprocal is not true: the tree-shift $X_{21}$ is defined in terms of two aperiodic transition matrices but it is not mixing.

Encouraged by the open problems established by Ban and Chang in \cite{BanChangChaos}, we propose the following three problems related to the subject of this work. To the best of our knowledge, the answers to such questions have not yet been obtained. 

\textbf{Problem 1.} Is there a tree-shift conjugacy that is not a $m$-block map, for some $m \in \N$? In other words, how can we establish a conjugation between tree-shifts that is not a block map?

\textbf{Problem 2.} Is the entropy $h_{BC}$ a topological invariant?

\textbf{Problem 3.} Is there a relation between the entropy of a Markov tree-shift whose allowed transitions are given by transition matrices, and the eigenvalues of such matrices, as in the one-dimensional case?

\end{document}